\newtheorem{mythm}{Theorem}[section]
\newtheorem{myprop}[mythm]{Proposition}
\newtheorem{mylem}[mythm]{Lemma}
\newtheorem{mydefn}[mythm]{Definition}
\newtheorem{myrem}[mythm]{Remark}}
\newcommand{\dis}{\displaystyle}
\def\R{\mathbb R}
\def\N{\mathbb N}
\def\C{\mathscr C}
\def\F{\mathscr F}
\def\E{\mathbb E}
\def\p{\mathbb P}
\def\e{\text{\rm{e}}}
\def\la{\langle}\def\d{\text{\rm{d}}}
\def\raa{\rangle}
\def\veps{\varepsilon}
\def\law{\mathscr{L}}
\def\pb{\mathscr{P}}
\def\wt{\widetilde}
\def\var{\mathrm{var}}
\def\W{\mathbb{W}}
\newenvironment{proof}{{\noindent\it Proof.}\ }{\hfill $\square$\par}
\numberwithin{equation}{section}
\begin{document}

\title{Existence of optimal feedback controls for McKean-Vlasov SDEs\footnote{Supported in part by National Key R\&D Program of China (No. 2022YFA1006000) and NNSFs of China (No. 12271397,  11831014)}}

\author{Jinghai Shao\thanks{Center for Applied Mathematics, Tianjin University, Tianjin 300072, China. Email: shaojh@tju.edu.cn
}}
\date{}
\maketitle

\begin{abstract}
  This work concerns the optimal control problem for McKean-Vlasov SDEs. We provide explicit conditions to ensure the existence of optimal Markovian feedback controls. Moreover, based on the flow property of the McKean-Vlasov SDE, the dynamic programming principle is established, which will enable to characterize the value function via the theory of Hamilton-Jacobi-Bellman equation on the Wasserstein space.
\end{abstract}

\textbf{AMS MSC 2010}: 60H10, 93B52, 69K30

\textbf{Key words}: Optimal controls, McKean-Vlasov SDEs, Wasserstein space, Compactification method

\section{Introduction}

The McKean-Vlasov stochastic differential equations (SDEs) were introduced to describe the asymptotic behavior of a generic element of a large population of particles with mean field interactions, and are closely related to a phenomenon referred usually to as propagation of chaos, which were extensively studied in the literature; see, for example, \cite{BFY15}, \cite{BLPR}, \cite{CDLL}, \cite{Wang18} and the monographs \cite{Car1, Car2} and references therein.

This paper investigates the optimal feedback control problem in finite horizon for McKean-Vlasov SDEs. Consider the following SDE:
\begin{equation}\label{o-1}
\d X_t=b(t,X_t,\law_{X_t},\alpha_t)\d t+\sigma(t,X_t,\law_{X_t})\d W_t,\quad X_s=\xi\ \text{with $\law_{\xi}=\mu$},
\end{equation}
where $b:[0,T]\times \R^d\times \pb(\R^d)\times \pb(U)\to \R^d$, $\sigma:[0,T]\times \R^d\times \pb(\R^d)\to \R^{d\times d}$; $(W_t)_{t\geq 0}$ is a $d$-dimension Wiener process; $\pb(\R^d)$ ($\pb(U)$) denotes the collection of all probability measures over $\R^d$ ($U$ respectively), called Wasserstein space for simplicity; $U$ is a compact set in $\R^k$ for some positive integer $k$; $\law_{\xi}$ denotes the distribution of the random variable $\xi$; $(\alpha_t)$   represents the control policy belong to the set of admissible controls $\Pi_{s,\mu}$, detailed in next section. Given certain measurable functions $f$ and $g$, we aim to minimize the following objective function: for $0\leq s<T<\infty$,
\begin{equation}\label{o-2} J(s,\mu;\alpha):=\E\Big[\int_s^Tf(r,X_r,\law_{X_r},\alpha_r)\d r+g(X_T,\law_{X_T})\Big].
\end{equation}
The corresponding value function is given by
\begin{equation}\label{o-3}
V(s,\mu)=\inf\nolimits_{\alpha\in \Pi_{s,\mu}} J(s,\mu;\alpha).
\end{equation}

The optimal control problem of McKean-Vlasov SDEs is motivated by the mean-field game theory developed by \cite{LL06} and \cite{HCM06}. It has been studied in \cite{AD10,BDL,CD15} by maximum principle method, in \cite{BFY15,BIMS,LP16,Pham17} by dynamic programming method.
As the state variable of the value function contains the probability measure, the optimal control problem \eqref{o-3} is essentially an infinite dimensional problem. To characterize the value function, many approaches have been proposed to develop  the viscosity solution theory to Hamilton-Jacobi-Bellman (HJB) equations on the Wasserstein space.  For instance, Burzoni et al. \cite{BIMS} used the linear functional derivative; Gangbo et al. \cite{GNT} used the Riemannian tangent space structure; Bensoussan et al. \cite{BFY15} studied the probability densities, which were viewed as elements in $L^2(\R^d)$ and develop the G\^ateaux differential structure in the Hilbert space $L^2(\R^d)$. Pham and Wei \cite{Pham17} adopted the approach of Lions' lifting  \cite{Card,Lions}, which provides  a lifting identification between measures and random variables, then used the theory on viscosity solutions to HJB equations in Hilbert space (cf. \cite{Li88}) to characterize the value function.  See \cite[Chapter 5]{Car1} for the discussion on the relationship between different derivatives in the Wasserstein space. The work \cite{BIMS} used the linear functional derivative on $\pb(\R^d)$, and the key point is the subtle construction of a distance-like function on the Wasserstein space. Besides, the set of admissible controls in \cite{BIMS} is restricted to contain only the measurable deterministic functions of time.

In this work our purpose is to study the existence of the optimal Markovian feedback controls.
There are very limited works on the existence of the optimal feedback controls for McKean-Vlasov SDEs. Under explicit conditions of the coefficients, we show the existence  by developing the compactification method for McKean-Vlasov SDEs. The compactification method has been developed in Kushner \cite{Ku72,Ku}, Haussmann and Lepeltier \cite{HL90}, Haussmann and Suo
\cite{Haus1} amongst others; see the recent survey Kushner \cite{Ku14} for more references. Although our idea is similar to \cite{Haus1}, the concrete technique is quite different. This can be seen from the fact that \cite{Haus1} cannot deal with the case that the cost function depends on the terminal value of the studied system (cf. \cite[Remark 2.2]{Haus1}), but our work can deal with the objective function containing the terminal cost. Then, using the flow property of McKean-Vlasov SDEs (cf. \cite{BLPR}), we establish the dynamic programming principle for $v(s,\mu)$ given by \eqref{o-3} as in \cite{BIMS}, \cite{Pham17}. Furthermore, we study the continuity of the value function. Based on the dynamic programming principle, one can proceed to characterize the value function as a (viscosity) solution to certain HJB equation on the Wasserstein space. In \cite{Sh23b}, we shall show that the value function is the unique viscosity solution to certain HJB equation in terms of Mortensen's derivative. In \cite{Sh23c}, we study the optimal control problem for McKean-Vlasov SDEs with reflection.

This work is organized as follows. In Section 2, we present the framework of the optimal control problem, especially introduce the set of control policies studied in this work.  Section 3 is devoted to the proof of the existing of the optimal feedback controls. In Section 4, we establish the dynamic programming principle, and investigate the continuity of the value function.

\section{Framework}

In a given probability space $(\Omega,\F,\p)$, we use $\law_X$ to denote the law of a random variable $X$. For two probability measures $\mu,\nu$ over $\R^d$, the total variation distance between them is defined by
\[\|\mu-\nu\|_\var=\sup\big\{|\la f,\mu \raa -\la f, \nu\raa|;\,f\in \mathscr{B}(\R^d), |f|\leq 1\big\},
\]where $\la f,\mu\raa=\int_{\R^d} f(x)\mu(\d x)$.
The $L^p$-Wasserstein distance between $\mu$ and $\nu$ is defined by
\begin{equation}\label{W-1}
\W_p(\mu,\nu)=\inf_{\Gamma\in \mathcal{C}(\mu,\nu)} \!\Big(\int_{\R^d\times\R^d}\!\!|x-y|^p\,\Gamma(\d x,\d y)\Big)^{\frac 1p},\quad p\geq 1,
\end{equation}
where $\mathcal{C}(\mu,\nu)$ denotes the collection of all couplings of $\mu$, $\nu$ on $\R^d\times\R^d$.
Let $T>0$ be a given constant throughout this work. $U$ is a compact subset of $\R^k$ for some $k\geq 1$. Let $\pb(U)$ and $\pb(\R^d)$ denote the set of probability measures over $U$ and  $\R^d$ respectively. For $p\geq 1$, put
\begin{align*}\pb_p(\R^d)&=\Big\{\mu\in \pb(\R^d);\, \int_{\R^d}|x|^p\mu(\d x)<\infty\Big\}.
\end{align*}
Let $\C([s,t];\R^d)$ be the path space of all continuous functions from $[s,t]$ to $\R^d$.

For a measurable function $u:U\to \R$, it can be generalized as a functional on $\pb(U)$ via
\[u(\alpha):=\int_{U}u(x)\alpha(\d x),\quad \forall \, \alpha\in \pb(U).\] In the following, we often use $u(\alpha)$ instead of $\la u,\alpha\raa$ or $\int_{U}u(x)\alpha(\d x)$ to simplify the notation without mentioning it again.

Consider the following controlled stochastic dynamical system characterized by a SDE of McKean-Vlasov type:
\begin{equation}\label{a-1}
\d X_t= b(t,X_t,\law_{X_t},\alpha_t )  \d t+\sigma(t,X_t,\law_{X_t})\d W_t,\quad X_s=\xi,
\end{equation}
where $b:[0,T]\times \R^d\times \pb(\R^d)\times \pb(U)\to \R^d$, $\sigma:[0,T]\times \R^d\times \pb(\R^d)\to \R^{d\times d}$, $(W_t)_{t\geq 0}$ is a $d$-dimensional Brownian motion; $(\alpha_t)$ is a $\pb(U)$-valued process represented the controls  imposed  on the studied system.

\begin{mydefn}\label{def-1}
Given $s\in [0,T)$, $\mu\in \pb_1(\R^d)$, a Markovian feedback control is a term $\Theta=(\Omega,\F,\p$, $ \{\F_t\}_{t\geq 0}, W_\cdot, X_\cdot, \alpha_\cdot)$ such that
\begin{enumerate}
  \item[$\mathrm{(i)}$] $(\Omega,\F,\p)$ is a probability space with filtration $\{\F_t\}_{t\geq 0}$;
  \item[$\mathrm{(ii)}$] $(W_t)_{t\geq 0}$ is an $\F_t$-adapted Brownian motion;
  \item[$\mathrm{(iii)}$] $(\alpha_t)_{t\in [s,T]}$ is a $\pb(U)$-valued measurable process on $\Omega$ adapted to $\{\F_t\}$  such that for any $\F_s$-measurable random variable $\xi$ satisfying $\law_\xi=\mu$, SDE \eqref{a-1} admits a weak solution $(X_t)_{t\in [s,T]}$  with initial value $X_s=\xi$, and the uniqueness in law holds for SDE \eqref{a-1}.
  \item[$\mathrm{(iv)}$] For each $t\in [s,T]$, there exists a measurable functional $F_t\R^d\to \pb(U)$ such that $\alpha_t=F_t(X_t)$.
\end{enumerate}
\end{mydefn}
The set of all feedback controls $\Theta$ corresponding to the initial value $(s,\mu)$ is denoted by $\Pi_{s,\mu}$.

\begin{myrem}\label{rem-1}
1).\ In Definition \ref{def-1}, we simply demand $(\alpha_t)_{t\in[s,T]}$ is measurable and $\F_t$-adapted, which implies that $(\alpha_t)_{t\in [s,T]}$ has a progressively measurable modification due to \cite[Proposition 1.12, p.5]{KaS}. Moreover,  according to  \cite[Remark 1.1, p.45]{Ikeda},    $(\alpha_t)_{t\in [s,T]}$ even has a modification to be predictable by noting that
$\E\Big[\int_0^T\!\Big(\int_U|z|\alpha_t(\d z)\Big)^2\d t\Big]<\infty$,
since $U$ is a compact set.

\noindent 2). According to measure theory, if $\alpha_t$ is measurable with respect to the $\sigma$-algebra generated by $X_t$, denoted by $\sigma\{X_t\}$, then there exists a measurable map $G_t:\R^d\to\pb(U)$ such that $\alpha_t= G_t(X_{t})$. Hence, to verify item $\mathrm{(iv)}$ in Definition \ref{def-1}, we only need to check that $\alpha_t$ is $\sigma\{X_t\}$ measurable.
%
\end{myrem}




%

Given two measurable functions $f:[0,T]\times \R^d\times \pb(\R^d)\times \pb(U)\to \R$ and $g:\R^d\times \pb(\R^d)\to \R$, the objective function is defined by
\begin{equation}\label{a-3}
J(s,\mu;\Theta)=\E_{s,\mu}\Big[\int_s^{T } f(r,X_r,\law_{X_r},\alpha_r)\d r+g(X_{T},\law_{X_{T}})\Big]
\end{equation} for $s\!\in \![0,T)$ and $\mu\!\in\! \pb(\R^d)$, where $\E_{s,\mu}$ means taking expectation w.r.t.\,the initial value $\law_{X_s}\!=\!\mu$.
The value function is defined by
\begin{equation}\label{a-4}
V(s,\mu)=\inf_{\Theta\in \Pi_{s,\mu}} J(s,\mu;\Theta).
\end{equation}
A feedback control $\Theta^\ast\in \Pi_{s,\mu}$ is said to be optimal, if it satisfies
 $J(s,\mu;\Theta^\ast)=V(s,\mu).$


Notice that under the uniqueness condition of Definition \ref{def-1}(iii), $J(s,\mu;\Theta)$ and hence $V(s,\mu)$ are well defined.
Namely, for any two given  random variables $\xi,\tilde \xi$ with $\law_\xi\!=\!\law_{\tilde \xi}\!\in \!\pb(\R^d)$,  the values of $J(s,\mu;\Theta)$ and $V(s,\mu)$ will be fixed no matter which initial value $X_s=\xi$ or $X_s=\tilde \xi$ is used to determine the solution of SDE \eqref{a-1} for the process $(X_t)$ in \eqref{a-3}.

Indeed,
noticing \[\E[g(X_T,\law_{X_T})]=\int_{\R^d} g(x,\law_{X_T}) \law_{X_T}(\d x),\] we see that the term $\E[g(X_T,\law_{X_T})]$ in \eqref{a-3} depends only on the distribution of $X_T$, which is determined by the law $\mu$ of $\xi$ due to Definition \ref{def-1}(iii).
Since $\Theta\in \Pi_{s,\mu}$ is a feedback control, for each $t\in [s,T]$, there exists a functional $F_t$ such that $\alpha_t=F_t(X_{ t })$. The distribution of $X_{  t }$  is uniquely determined by the distribution $\mu$ of $\xi$ thanks to the uniqueness in law for SDE \eqref{a-1}.   Then, from the representation
\begin{align*}
  &\E\Big[\int_s^T \!f(r,X_r,\law_{X_r}, \alpha_r)\d r\Big]=\int_s^T\! \int_{ \R^d }\!f(r, x_{ r } ,\law_{X_r},  F_r(x_{r}))\law_{X_r}(\d x_{ r })\d r,
\end{align*}
we have that the running cost in \eqref{a-3} is also uniquely determined by $\mu$.

%

We introduce the conditions on the controlled system used below.
\begin{itemize}
  \item[$\mathrm{(H1)}$] There exists a constant $K_1$ such that
      \[|b(t,x,\nu_1, \alpha)-b(s,y,\nu_2, \alpha)|
      +\|\sigma(t,x,\nu_1)-\sigma(s,y,\nu_2)\|\leq K_1\big(|t-s|+|x-y|+\W_1(\nu_1,\nu_2)\big)\]
      for all $s,t\in [0,T]$, $x,y\in \R^d$, $\nu_1,\nu_2\in \pb_1(\R^d)$,   and $\alpha\in \pb(U)$.
  \item[$\mathrm{(H2)}$] There exists a constant $K_2$ such that
      \[|b(t,x,\mu, \alpha)|\!+\!\|\sigma(t,x,\mu)\|
      \leq K_2\Big(1\!+\!|x|\!+\!\int_{\R^d}\! |z|\,\mu(\d z)\Big)
       \]  for $t\in [0,T], x\in \R^d,   \mu\in \pb_1(\R^d), \alpha\in \pb(U).$
  \item[$\mathrm{(H3)}$] There exist constants $K_3$ and $K_4$ such that
      \begin{gather*}
      |f(t,x,\mu,\alpha)-f(s,y,\nu,\alpha)|+ |g(x,\mu)-g(y,\nu)| \leq K_3\big(|t-s|+|x-y|+\W_1(\mu,\nu)\big),\\
      |f(t,x,\mu,\alpha)|+|g(x,\mu)|\leq K_4\Big(1+|x|+\int_{\R^d}|z|\mu(\d z)\Big)
      \end{gather*}
      for $t,s\in[0,T]$, $x,y\in \R^d$, $\mu,\nu\in \pb_1(\R^d)$, and $\alpha\in \pb(U)$.
\end{itemize}



The conditions (H1) and (H2) are used to ensure the existence of solution to SDE \eqref{a-1} under suitable control process $(\alpha_t)$. For example, under (H1) and (H2), for each $\alpha\in \pb(U)$, we consider the control strategy $\alpha_t\equiv \alpha\in \pb(U)$, then SDE \eqref{a-1} admits a unique weak solution for any initial value $X_s=\xi$ with $\law_\xi\in \pb_1(\R^d)$; see, e.g.
\cite[Theorems 2.1 and 3.1]{Fun}. Moreover, if assume, in addition, $\sigma(t,x,\mu)$ depends only on $t,x$, then SDE \eqref{a-1} admits a unique strong solution for $\alpha_t\equiv \alpha$ due to \cite[Theorem 2.1]{Wang18}.  So, there is an admissible control $\Theta$ associated with such deterministic control strategy $\alpha_t\equiv \alpha$.
Consequently, $\Pi_{s,\mu}$ is not empty.

\begin{myrem}\label{rem-2.1}
We notice that under conditions $\mathrm{(H1)}$, $\mathrm{(H2)}$, it does not mean that for any $\F_t$-adapted process $(\alpha_t)$ integrable in $t$ over  $[s,T]$ with $T>1$, the corresponding SDE \eqref{a-1} will admit a solution with initial value $X_s=\xi$ and $\law_\xi\in \pb_1(\R^d)$. The process $(t,\omega)\mapsto \alpha_t(\omega)$ has essential impact on the wellposedness of the associated SDE \eqref{a-1}.

Consider SDE \eqref{a-1} with the coefficients $\sigma(t,x,\nu)=0$, $b(t,x,\nu, \alpha)=\int_{[-T,T]} x \alpha(\d x)$  for $(t,x,\nu,\alpha)\in [0,T]\times\R^d\times\pb(\R^d)\times \pb(U)$ with $U=[-T,T]$. Clearly, $\mathrm{(H1)}$, $\mathrm{(H2)}$ hold. Let $\alpha_t=-\delta_{\mathrm{sgn} X_t}$, where $\delta_x$ denotes the Dirac measure over $x$ and $\mathrm{sgn}(x)=1$ if $x>0$; $=-1$, if $x\leq 0$. Then, $\alpha_t$ is a bounded functional of $X_t$, which yields that $\alpha_t$ is $\F_t$-adapted and $\E\int_0^T|\alpha_r(|\cdot|)|^2\d r<\infty$. In this case,
the SDE \eqref{a-1} turns into
\begin{equation}\label{a-5}
\d X_t=-\mathrm{sgn}(X_t)\d t.
\end{equation}
However, if we consider the initial value $X_0=0$, then according to \cite[Example 1.16]{Cherny},   equation \eqref{a-5} has no (weak or strong) solution. If $X_0=a\neq 0$, equation \eqref{a-5} admits a solution up to the hitting time of $0$. Moreover, this example also tells us the set $\Pi_{s,\mu}$ of feedback controls under the conditions $\mathrm{(H1)}$ and $\mathrm{(H2)}$ also depends on $s$ and $\mu$. 
\end{myrem}

\section{Existence of Optimal Feedback Control}
 This section is devoted to showing the existence of optimal feedback controls. We shall use the compactification method which was proposed by Kushner \cite{Ku72}, and has been developed in many works such as \cite{DM,FS21,Haus1,Haus}; see the survey  \cite{Ku14} for more references.  In this part,  we shall generalize this method to deal with the optimal control problem for the  McKean-Vlasov equations.

Let $\mathscr{U}$ be the collection of maps $\mu: [0,T]\to \pb(U)$ such that for any $A\in \mathscr{B}(U)$, $t\mapsto \mu_t(A)$ is integrable relative to $\d t$ on $[0,T]$. $\mathscr{U}$ can be viewed as a subspace of $\pb([0,T]\!\times \!U)$ through the map
 \[(\mu_t)_{t\in [0,T]}\mapsto \bar{\mu}, \quad \]
 \text{where $\bar\mu$ is defined by}: for any $A_1\in \mathscr{B}([0,T])$, $A_2\in \mathscr{B}(U)$,
 \[\bar\mu(A_1\times A_2)=\frac1T\int_{A_1}\mu_t(A_2)\d t.\]
Endow $\mathscr{U}$ with the weak topology induced from $\pb([0,T]\!\times\! U)$.
Since $[0,T]\!\times\! U$ is compact, $\pb([0,T]\!\times \!U)$ is compact too. $\mathscr{U}$ is also compact as a closed set in $\pb([0,T]\!\times\! U)$.

As done in \cite{Haus1}, it is convenient to transform the feedback controls into the canonical space. Let
\begin{equation}\label{c-1}
\mathcal{Y}=\C([0,T];\R^d)\times\C([0,T];\R^d)\times \mathscr{U},
\end{equation} which is endowed with the product topology.  Let $\wt{\mathcal{Y}}$ be the Borel $\sigma$-field, $\wt{\mathcal{Y}}_t$ the $\sigma$-fields up to time $t$. For each feedback control $\Theta=(\Omega,\F,\p,\{\F_t\}_{t\geq 0},W_\cdot$, $ X_\cdot,\alpha_\cdot) \in \Pi_{s,\mu}$, define a map $\Psi_\alpha:\Omega\to \mathcal{Y}$ by
\begin{equation}\label{c-2}
\Psi_\alpha(\omega)=(W_t(\omega),X_t(\omega),\alpha_t( \omega))_{t\in [0,T]},
\end{equation}
where $X_t(\omega):=X_s(\omega)$, $\alpha_t(\omega):= \alpha_s(\omega)$ for $t\in [0,s]$.
Let $R=\p\circ \Psi_\alpha^{-1}$ be the induced probability measure over the canonical space $\mathcal{Y}$, then
\begin{equation}\label{c-3}
J(s,\mu;\Theta)=J(s,\mu;R):=\E_R\Big[\int_s^T f(r, X_r,\law_{X_r},\alpha_r)\d r+g(X_T,\law_{X_T})\Big|\law_{X_s}=\mu\Big].
\end{equation}
Put
\[\mathcal{R}_{s,\mu}=\big\{R=\p\circ \Psi_\alpha^{-1}; \ \Theta\in\Pi_{s,\mu}\big\}.\]
Consequently, under the transform $\Psi_\alpha$, we have
\[V(s,\mu)=\inf_{\Theta\in \Pi_{s,\mu}} J(s,\mu;\Theta)= \inf_{R\in \mathcal{R}_{s,\mu}} J(s,\mu;R).\] Through the transform $\Psi_\alpha$, every $\Theta\in \Pi_{s,\mu}$ corresponds to a probability measure $R\in \mathcal{R}_{s,\mu}$, and vice versa. Therefore, we only need to show the existence of an $R^\ast\in \mathcal{R}_{s,\mu}$ such that $V(s,\mu)=J(s,\mu;R^\ast)$ to ensure the existence of optimal control $\Theta^\ast\in \Pi_{s,\mu}$. See \cite[Section 2.2]{Haus1} for more details.

As explained in Subsection 2.1, the uniqueness in law for McKean-Vlasov SDE \eqref{a-1} plays a crucial role in the wellposedness of the value function $V(s,\mu)$. However, it is not an easy task to ensure the uniqueness in law for SDE \eqref{a-1} due to the dependence of distribution of  the diffusion coefficient. For instance, there are examples that the uniqueness in law may fail even if the drift $b$ is bounded and    $\sigma\sigma^\ast$   is uniformly elliptic; see, \cite[Example 3]{Zhao}. This is quite different to the classical SDEs whose coefficients do not rely on the distribution  (cf. Stroock and Varadhan \cite{SV79}). Next, we shall use the result in \cite{Ray} to ensure the uniqueness in law of SDE \eqref{a-1}. To this aim, let us first introduce the notion of linear functional derivative.

\begin{mydefn}\label{def-2}
The continuous function $h:\pb(\R^d)\to \R$ is said to own a \emph{linear functional derivative} if there exists a real valued bounded measurable function
\[\pb(\R^d)\times\R^d\ni (\mu,x)\mapsto \partial_\mu h (x)\in \R\]
such that for all $x\in \R^d$, the map $\pb(\R^d)\ni\mu\mapsto \partial_\mu h (x)$ is continuous and for all $\mu,\nu\in \pb(\R^d)$, it holds
\begin{equation}\label{c-3.5}
\lim_{\veps\downarrow 0} \frac{h\big((1-\veps)\mu+\veps\nu\big)-h(\mu)}{\veps} =\int_{\R^d} \partial_\mu h (y)\,\d (\mu-\nu)(y).
\end{equation} The map $y\mapsto \partial_\mu h(\mu)(y)$ is unique determined up to an additive constant.
\end{mydefn}
Such a notation of derivatives was introduced in \cite{Car1}, see also \cite{CDLL}. It was used in \cite{Ray} to study the martingale problem for McKean-Vlasov SDEs.

Let us introduce a modification of the Wasserstein distance for $\mu,\nu\in \pb(\R^d)$, that is,
\begin{equation}\label{W-2}
\overline{\W}_1(\mu,\nu):=\inf_{\Gamma\in \mathcal{C}(\mu,\nu)} \int_{\R^d\times\R^d}\!\! (1\wedge|x-y| )\Gamma(\d x,\d y).
\end{equation}
It is clear that $\overline{\W}_1(\mu,\nu)\leq \W_1(\mu,\nu)$ for any $\mu,\nu\in \pb(\R^d)$. Also, the convergence in $\overline{\W}_1$ is equivalent to the weak convergence in $\pb(\R^d)$.  Moreover, according to \cite[Theorem 6.15]{Vil}, by the boundedness of $1\wedge |x-y|$,
\begin{equation}\label{W-3}
\overline{\W}_1(\mu,\nu)\leq  \|\mu-\nu\|_{\var},\quad \mu,\nu\in \pb(\R^d).
\end{equation}

After these preparations, we can introduce  the following condition in order to guarantee the uniqueness in law for the solution to McKean-Vlasov SDE \eqref{a-1} under feedback controls.
\begin{itemize}
  \item[$\mathrm{(H4)}$]
  \begin{itemize}
  \item[(i)] The drift $b(t,x,\mu,\alpha)$ is   bounded, $\R^d\!\times\!\pb(U)\!\ni\!(x,\alpha)\mapsto b(t,x,\mu,\alpha)$ is continuous, and there exists a constant $C>0$ such that for all $t\in [0,T]$, $x\in \R^d$, $\alpha\in  \pb(U) $,
      \[|b(t,x,\mu,\alpha)-b(t,x,\nu,\alpha)|\leq C \overline{\W}_1(\mu,\nu),\quad \mu,\nu\in \pb(\R^d),\]
      \item[(ii)] The coefficient $[0,T]\!\times\!\R^d\!\times\!\pb(\R^d)\!\ni\! (t,x,\mu)\mapsto \sigma(t,x,\mu)$ is a bounded continuous function with $\pb(\R^d)$ endowed with the distance $\overline{\W}_1$.  For any $(t,\mu)\in [0,T]\times\pb(\R^d)$, $\R^d\ni x\mapsto a(t,x,\mu):=(\sigma\sigma^\ast)(t,x,\mu)\in \R^d\times\R^d$ is uniformly $\eta$-H\"older continuous for some $\eta\in (0,1]$, i.e.
      \[\sup_{t\in[0,T],\mu\in\pb(\R^d), x\neq y}\frac{ |a(t,x,\mu)-a(t,y,\mu)|} {|x-y|^{\eta}}<\infty.\]
      \item[(iii)] For any $i,j\in \{1,2,\ldots,d\}$  and any $(t,x)\in [0,T]\times\R^d$, $\pb(\R^d)\ni\mu\mapsto a_{ij}(t,x,\mu)$ has a linear functional derivative.
      \item[(iv)] For any $i,j\in \{1,2,\ldots,d\}$ and any $(t,\mu)\in [0,T]\times \pb(\R^d)$, $\R^d\times\R^d\ni (x,y)\mapsto \partial_\mu a_{ij}(t,x,\mu)(y)$ is an $\eta$-H\"older continuous function for some $\eta\in (0,1]$, uniformly with respect to $t$ and $\mu$.
      \item[(v)] There exists a constant $\lambda >1$ such that for any $(t,\mu)\in [0,T]\times \pb(\R^d)$, for any $(x,z)\in \R^d\times \R^d$,
          \[\lambda^{-1}|z|^2\leq \la a(t,x,\mu)z,z\raa\leq \lambda |z|^2.\]
  \end{itemize}
\end{itemize}

\begin{mylem}[\cite{Ray}, Theorem 3.4]\label{lem-4}
Assume  $\mathrm{(H4)}$ holds, and $\alpha_t=F_t(X_{t})$ for some measurable functional $F_t: \R^d \to \pb(U)$ for $t\in [s,T]$. Then the uniqueness in law holds for SDE \eqref{a-1}.
\end{mylem}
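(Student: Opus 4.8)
The plan is to eliminate the control from the equation by substitution, so that the uniqueness-in-law question for \eqref{a-1} under a feedback control becomes the same question for a classical McKean-Vlasov SDE, and then to apply \cite[Theorem 3.4]{Ray} to the latter. Concretely, if $\alpha_t=F_t(X_t)$ with $F_t:\R^d\to\pb(U)$ measurable for $t\in[s,T]$, then \eqref{a-1} reads
\[
\d X_t=\tilde b(t,X_t,\law_{X_t})\,\d t+\sigma(t,X_t,\law_{X_t})\,\d W_t,\qquad \tilde b(t,x,\mu):=b\big(t,x,\mu,F_t(x)\big),
\]
which no longer involves $(\alpha_t)$. It therefore suffices to check that $\tilde b$ and $\sigma$ fulfil the hypotheses on the coefficients in \cite[Theorem 3.4]{Ray}.

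First I would record the properties of $\tilde b$. By (H4)(i) the map $b$ is uniformly bounded, so $\tilde b$ is uniformly bounded as well; and, being the composition of the Borel map $x\mapsto F_t(x)$ with the Borel map $b$, it is measurable in its arguments. To make $\tilde b$ an admissible drift in the sense of \cite{Ray}, one needs a single representation $(t,x)\mapsto F_t(x)$ that is jointly measurable; since Definition \ref{def-1}(iv) only provides measurability of $F_t$ for each fixed $t$, I would first pass to a predictable modification of the process $(\alpha_t)$ (which exists by Remark \ref{rem-1}) and then disintegrate it through $X_t$ to obtain such a jointly measurable version of $F$. Finally, because $F_t(x)$ does not depend on the measure variable, the Lipschitz bound in (H4)(i) transfers verbatim to $\tilde b$: for all $t\in[0,T]$, $x\in\R^d$ and $\mu,\nu\in\pb(\R^d)$,
\[
|\tilde b(t,x,\mu)-\tilde b(t,x,\nu)|=\big|b(t,x,\mu,F_t(x))-b(t,x,\nu,F_t(x))\big|\le C\,\overline{\W}_1(\mu,\nu).
\]
For the diffusion, conditions (H4)(ii)--(v) are exactly the assumptions on $\sigma$ and $a=\sigma\sigma^\ast$ in \cite{Ray}: boundedness and $\overline{\W}_1$-continuity of $\sigma$, uniform $\eta$-H\"older continuity of $x\mapsto a(t,x,\mu)$, existence of the linear functional derivative $\partial_\mu a_{ij}$ together with its uniform $\eta$-H\"older continuity in $(x,y)$, and uniform ellipticity; nothing is needed here beyond observing the correspondence.

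With these facts in hand, \cite[Theorem 3.4]{Ray} applied to $\tilde b$ and $\sigma$ gives well-posedness --- in particular uniqueness in law --- of the reduced McKean-Vlasov SDE, which is precisely the uniqueness in law claimed in Lemma \ref{lem-4} for \eqref{a-1}. I expect the only genuinely delicate point to be arranging the jointly measurable feedback map $(t,x)\mapsto F_t(x)$ so that $\tilde b$ qualifies as an admissible (bounded, measurable) drift; the transfer of the Lipschitz-in-measure estimate and the identification of (H4)(ii)--(v) with the hypotheses of \cite{Ray} are immediate.
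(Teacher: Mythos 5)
Your proposal matches the paper's treatment: the lemma is quoted directly from \cite[Theorem 3.4]{Ray}, and the paper's accompanying discussion performs exactly your reduction, viewing the controlled equation as a distribution dependent SDE with drift $\tilde b(t,x,\mu):=b(t,x,\mu,F_t(x))$ so that only the $\overline{\W}_1$-Lipschitz continuity of $\mu\mapsto b(t,x,\mu,\alpha)$ from (H4)(i) needs to be checked, the diffusion hypotheses (H4)(ii)--(v) being those of \cite{Ray} verbatim. Your additional care about a jointly measurable version of $(t,x)\mapsto F_t(x)$ is a reasonable refinement the paper leaves implicit, but it does not change the argument.
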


Compared with condition (H1), the condition (H4)(i) does not needs to consider the regularity of $x\mapsto b(t,x,y,\alpha)$, and hence is more suitable to be used to study the  uniqueness in law for the controlled SDE \eqref{a-1}. Indeed,  we can  view the controlled SDE \eqref{a-1} as a distribution dependent SDE with drift $\tilde b(t,x,\mu):=b(t,x,\mu, F_t(x))$ associated with a feedback control $\alpha_t=F_t(X_t)$.
Therefore, in this setting for the uniqueness in law of SDE \eqref{a-1}, one only needs to check the Lipschitz continuity of $\mu\mapsto b(t,x,\mu,\alpha)$. This simplifies greatly the verification of item (iii) in Definition \ref{def-1} of a Markovian feedback control. Otherwise, it is a quite challenge task to study the uniqueness in law for   distribution dependent SDEs associated with general feedback controls.

\begin{mythm}[Existence of optimal control]\label{thm-1}
Suppose conditions $\mathrm{(H3)}$, $\mathrm{(H4)}$  hold. Then, for each $(s,\mu)\in [0,T]\times \pb_p(\R^d)$ for some $p>1$, there exists an optimal Markovian feedback control $\Theta^\ast\in \Pi_{s,\mu}$.
\end{mythm}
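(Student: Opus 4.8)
The plan is to run the compactification argument on the canonical space $\mathcal{Y}$. First I would fix $(s,\mu)\in[0,T]\times\pb_p(\R^d)$ with $p>1$ and take a minimizing sequence $\Theta^n\in\Pi_{s,\mu}$, i.e. $J(s,\mu;\Theta^n)\to V(s,\mu)$, with corresponding laws $R^n=\p\circ\Psi_{\alpha^n}^{-1}\in\mathcal{R}_{s,\mu}$ on $\mathcal{Y}=\C([0,T];\R^d)\times\C([0,T];\R^d)\times\mathscr{U}$. The first substantive step is tightness of $(R^n)$. The $\mathscr{U}$-marginal lives in the compact space $\mathscr{U}\subset\pb([0,T]\times U)$, so it is automatically tight. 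For the two $\C([0,T];\R^d)$-marginals I would use (H2): the Wiener component is just Brownian motion, and for the state component the bound on $b,\sigma$ together with the standard moment estimate for McKean-Vlasov SDEs (using $\mu\in\pb_p(\R^d)$, $p>1$, to get uniform $p$-th moments $\sup_n\E_{R^n}[\sup_{t\le T}|X_t|^p]<\infty$) gives a Kolmogorov-type modulus-of-continuity estimate uniform in $n$, hence tightness of the $X$-marginals on path space. So $(R^n)$ is tight on $\mathcal{Y}$; pass to a subsequence $R^n\Rightarrow R^\ast$.

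Next I would identify the limit $R^\ast$ as an element of $\mathcal{R}_{s,\mu}$. This is the delicate part and splits into several checks. (a) Under $R^\ast$ the second coordinate is a Brownian motion with respect to the canonical filtration $\wt{\mathcal{Y}}_t$: this follows by passing to the limit in the characteristic-function/martingale characterization, using that the property is preserved under weak convergence. (b) The canonical triple solves the controlled martingale problem, i.e. for $\varphi\in C_b^2(\R^d)$ the process $\varphi(X_t)-\varphi(X_s)-\int_s^t\big(\mathcal{L}_r\varphi\big)(X_r)\,\d r$ is an $R^\ast$-martingale, where $\mathcal{L}_r$ carries the drift $\int_U b(r,X_r,\law_{X_r},\cdot)\,\alpha_r(\d\cdot)$ (with $\alpha_r$ now a measure on $U$) and diffusion $\tfrac12 a(r,X_r,\law_{X_r})$; here I would use continuity of $b,\sigma$ in $(x,\mu,\alpha)$ from (H4), boundedness of $b,\sigma$, uniform integrability from the $p$-th moment bound to pass the time-integral term to the limit, and continuity of $\nu\mapsto\W_1(\cdot,\nu)$ on $\pb_1$ applied along $\law_{X_r^n}\to\law_{X_r^\ast}$ (consistency: the flow of marginals of $R^\ast$ must coincide with the $\law_{X_r}$ plugged into the coefficients — this fixed-point/consistency identity is verified because $\law_{X_r^n}\Rightarrow\law_{X_r^\ast}$ and the coefficients are continuous). (c) By (H4) and Lemma 2.7 the resulting McKean-Vlasov SDE has a unique law, so $R^\ast$ is uniquely characterized; and by Remark 2.2 the control $\alpha_r$ can be realized in feedback form $\alpha_r=F_r(X_r)$ — this is where I would invoke a measurable-selection / disintegration argument to write the limiting relaxed control as a measurable function of the current state (the $\mathscr{U}$-valued limit disintegrates as $\alpha_r(\d z)$ and, after checking it is $\sigma\{X_r\}$-measurable for a.e.\ $r$ using the martingale-problem characterization, Remark 2.2(2) yields the functional $F_r$). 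Hence $R^\ast\in\mathcal{R}_{s,\mu}$.

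Finally I would show $J(s,\mu;R^n)\to J(s,\mu;R^\ast)$, which combined with $J(s,\mu;R^n)\to V(s,\mu)$ and $R^\ast\in\mathcal{R}_{s,\mu}$ forces $J(s,\mu;R^\ast)=V(s,\mu)$, i.e.\ optimality. For the running cost $\E_{R^n}\!\int_s^T f(r,X_r,\law_{X_r},\alpha_r)\,\d r$ I would use (H3): $f$ is Lipschitz in $(t,x,\mu)$ and continuous, and $f$ is an affine functional of $\alpha_r\in\pb(U)$, so $(y,\omega,\bar\mu)\mapsto\int_s^T f(r,\cdot)\,\d r$ is continuous and bounded on the support (again using the uniform $p$-th moment bound and de la Vallée-Poussin for uniform integrability), giving convergence under $R^n\Rightarrow R^\ast$. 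For the terminal cost $\E_{R^n}[g(X_T,\law_{X_T})]=\int g(x,\law_{X_T^n})\,\law_{X_T^n}(\d x)$ I would use continuity and linear growth of $g$ from (H3) together with $\law_{X_T^n}\Rightarrow\law_{X_T^\ast}$ in $\pb_1$ (the $p$-th moments being uniformly bounded, weak convergence upgrades to $\W_1$-convergence). I expect the main obstacle to be step (b)–(c): establishing the consistency of the marginal flow in the limit \emph{and} producing the genuine feedback (state-measurable) representation of the limiting control, since weak limits of relaxed controls are a priori only adapted, not Markovian — this is exactly why conditions (H4) (for uniqueness in law) and Remark 2.2 (for the $\sigma\{X_t\}$-measurability criterion) are invoked, and care is needed because the cost contains a terminal term, the feature that (as noted after Remark 2.1 in the excerpt) distinguishes this setting from \cite{Haus1}.
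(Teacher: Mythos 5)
Your overall architecture coincides with the paper's: compactification on the canonical space $\mathcal{Y}$, tightness of the minimizing sequence $(R^n)$ (compactness of $\mathscr{U}$ for the control marginal, boundedness of $b,\sigma$ from (H4) for the state marginal), identification of the weak limit as an admissible control, and convergence of the cost via uniform integrability from the $p$-th moment bound. The tightness and cost-convergence steps are essentially correct as you describe them.

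There is, however, a genuine gap at exactly the point you flag as ``the main obstacle'': you never actually prove that the limiting control is of Markovian feedback form, i.e.\ that $\alpha_r^{(0)}$ is $\sigma\{X_r^{(0)}\}$-measurable. Saying this follows ``using the martingale-problem characterization'' is not an argument: the martingale problem only constrains the joint law of $(X,\alpha)$ through the generator, and it is perfectly consistent with the limiting relaxed control being adapted to the full filtration but not a function of the current state --- weak limits of feedback controls are a priori not feedback. The paper closes this gap with a specific device that your purely weak-convergence route does not have access to: it first applies Skorokhod's representation theorem to realize $(W^{n_k},X^{n_k},\alpha^{n_k})\to(W^{(0)},X^{(0)},\alpha^{(0)})$ almost surely on a common probability space, and then runs a decreasing-$\sigma$-field argument, namely $\sigma\{\alpha_t^{(0)}\}\subset\bigcap_{k\ge1}\sigma\{\alpha_t^{n_m}:m\ge k\}\subset\bigcap_{k\ge1}\sigma\{X_t^{n_m}:m\ge k\}=\sigma\{X_t^{(0)}\}$, using that each $\alpha_t^{n_k}$ is a measurable function of $X_t^{n_k}$ and that both sequences converge a.s. Only after this does Remark 2.2 produce the map $F_t$ with $\alpha_t^{(0)}=F_t(X_t^{(0)})$. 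Relatedly, your step (c) has the logic reversed: Lemma 2.7 (uniqueness in law) \emph{assumes} $\alpha_t=F_t(X_t)$, so you cannot invoke it to characterize $R^\ast$ before the feedback representation is established. A secondary difference of route: the paper passes to the limit directly in the integrated SDE (Lemma 3.4, handling the stochastic integral by a time-discretization argument) rather than through the controlled martingale problem; your martingale-problem variant is workable for the identification of the dynamics, but it does not substitute for the measurability argument above.
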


\begin{proof}
  We only need to consider the nontrivial case that $V(s,\mu)<\infty$. Otherwise, every Markovian feedback control $\Theta\in \Pi_{s,\mu}$ will be optimal. For simplicity of notation, we only consider the case $s=0$. The proof is separated into three steps.

  \noindent\textbf{Step 1}. We aim to show the tightness of a minimizing sequence. According to the definition of $V$, there exists a sequence of feedback controls $\Theta_n\in \Pi_{s,\mu}$ with associated control strategies $\alpha^n_\cdot$ such that
  \begin{equation}\label{c-4}
  \lim_{n\to \infty} J(0,\mu;\Theta_n)=V(0,\mu)<\infty.
  \end{equation}
  Setting $R_n=\p\circ\Psi_{\alpha^n}^{-1}$, we shall prove the tightness of $(R_n)_{n\geq 1}$. Recall that
  \[\Psi_{\alpha^n}(\omega)=(W^n_\cdot(\omega), X^n_\cdot(\omega), \alpha_\cdot^n(\omega))\in \mathcal{Y}.\]
  Let $\law_{W^n},\,\law_{X^n},\,\law_{\alpha^n}$ be the marginal distributions of $R_n$, $n\geq 1$. Since the distributions of all $\law_{W^n}$ are the same, that is, the distribution of $d$-dimensional Brownian motion, therefore, the tightness of $(\law_{W^n})_{n\geq 1}$ is obvious. Moreover, since $ \law_{\alpha^n},\, n\geq 1 $ are distributions in the compact space $\mathscr{U}$, $(\law_{\alpha^n})_{n\geq 1}$ is tight too. Now what we need to show is the tightness of $(\law_{X^n})_{n\ge 1}$.

  As $(W_t, X_t,\alpha_t)$ satisfies SDE \eqref{a-1}, it holds
  \[\E|X^n_t-X^n_s|\leq \E\Big[\int_s^t|\la b(r,X^n_r,\law_{X^n_r},\cdot), \alpha^n_r\raa |\d r\Big]+\E\Big|\int_s^t\sigma(r,X^n_r,\law_{X^n_r})\d W^n_r\Big|.\]
  Due to the boundedness of the coefficients $b$ and $\sigma$,   there exists constant $C>0$ such that
  \[\E|X_t^n-X^n_s|\leq C(|t-s|+ \sqrt{|t-s|}),\quad t,s\in [0,T], \ n\geq 1.\]
  According to \cite[Theorem 12.3]{Bill}, the distributions of $(X_\cdot^n)_{n\geq 1}$ are tight, i.e. $(\law_{X^n})_{n\geq 1}$ is tight.
  Because all the marginal distributions of $(R_n)_{n\geq 1}$ are tight, so is $(R_n)_{n\geq 1}$. Hence, there exists a subsequence $(R_{n_k})_{k\ge 1}$ of $(R_n)_{n\geq 1}$ and a probability measure $R_0$ on $\mathcal{Y}$ such that
  \[ R_{n_k}\ \text{weakly converges to $R_0$ as $k\to \infty$}.\]

\noindent\textbf{Step 2}. In this step, we aim to show the limit $R_0$ of $R_{n_k}$ is also a Markovian feedback control, which will be the desired optimal Markovian feedback control.

According to Skorokhod's representation theorem (cf. e.g. \cite[Theorem 1.8, p.102]{EK}), there exists a probability space $(\Omega',\F',\p')$ on which defined a sequence of random variables $Y_{n_k}=(W_t^{n_k}, X_t^{n_k}, \alpha_t^{n_k})_{t\in [0,T]}\in \mathcal{Y}$, $k\geq 1$, and $Y_0=(W_t^{(0)},X_t^{(0)}, \alpha_t^{(0)})_{t\in [0,T]}\in \mathcal{Y}$, with the distribution $R_{n_k}$ and $R_0$ respectively such that
\begin{equation}\label{c-5}
\lim_{k\to \infty} Y_{n_k}=Y_0,\quad \quad \p'\text{-a.s.}.
\end{equation}

Next, we go to show that $Y_0=(W_t^{(0)}, X_t^{(0)},\alpha_t^{(0)})_{t\in [0,T]}$ is associated with a feedback control in $\Pi_{0,\mu}$.
To this purpose, we need to prove three assertions below according to Definition \ref{def-1}.
\begin{itemize}
  \item[(i)] $(W_t^{(0)}, X_t^{(0)},\alpha_t^{(0)})_{t\in [0,T]}$ satisfies the SDE
      \begin{equation} \label{c-6}
      X_t^{(0)}\!=X_0^{(0)}\!+\!\int_0^t\!\la b(r,X_r^{(0)}, \law_{X_r^{(0)}},\cdot),\alpha_r^{(0)}\raa \d r\!+\!\int_0^t\!\! \sigma(r,X_r^{(0)},\law_{ X_r^{(0)}})\d W_r^{(0)},\quad t\in\! [0,T].
      \end{equation}
  \item[(ii)] $\alpha_t^{(0)}$ is $\sigma\{X_t^{(0)}\}$ measurable, $0\leq t\leq T$.
  \item[(iii)] The uniqueness in law holds for SDE \eqref{c-6}.
\end{itemize}
Under (H4), Lemma \ref{lem-4} implies that (iii) the uniqueness in law holds for SDE \eqref{c-6}. Up to taking a subsequence of $n_k$, by the bounded continuous property of $b(t,x,\mu,\alpha)$, $\sigma(t,x,\mu)$, and the almost sure convergence of $(W_\cdot^{n_k},X_\cdot^{n_k},\alpha_\cdot^{n_k})\in \mathcal{Y}$ to $(W_\cdot^{(0)},X_\cdot^{(0)},\alpha_\cdot^{(0)})$, assertion (i) follows from the fact
\begin{equation}\label{c-7}
X_t^{n_k}=X_0^{n_k}+\int_0^t\!  b(r,X_r^{n_k},\law_{X_r^{n_k}},\alpha_r^{n_k}) \d r +\int_0^t\! \sigma(r, X_r^{n_k}, \law_{X_r^{n_k}})\d W_r^{n_k},\quad t\in[0,T].
\end{equation}
This is the crucial part of the proof, and its argument is  delicate and cumbersome, so it is deferred to the Lemma \ref{lem-5} below. \eqref{c-6} follows from \eqref{c-7} according to Lemma \ref{lem-5} by taking a subsequence of $(n_k)$ if necessary.

Now we show assertion (ii). We adopt the notation in the study of backward martingale  to define
\[\F_{-k,t}^X=\sigma\{ X_t^{n_m}; m \geq k\}, \quad \F_{-k,t}^\alpha=\sigma\{\alpha_t^{n_m}; m\geq k\}, \quad t\in [0,T]. \]
Then
\[\F_{-1,t}^X \supset \F_{-2,t}^X\supset\cdots\supset \F_{-k,t}^X\supset \F_{-k-1,t}^X\supset\cdots. \]
Put $ \F_{-\infty,t}^X=\bigcap_{k\geq 1}\F_{-k,t}^X$. Then $\F_{-\infty,t}^X$ is a $\sigma$-field, and it concerns only the limit behavior of the sequence $(X_t^{n_k})_{k\ge 1}$. Since $\lim_{k\to \infty} X_t^{n_k}=X_t^{(0)}$ a.s. for every $t\in [0,T]$, we have
\begin{equation}\label{c-8}
\F_{-\infty, t}^X=\sigma\{X_t^{(0)}\}.
\end{equation}
Since $\alpha_t^{n_k}\in \sigma\{X_t^{n_k}\}$, $k\geq 1$,  it holds $\F^\alpha_{-k,t} \subset \F_{-k,t}^{X}$. Using Lemma \ref{lem-5} below, the almost sure convergence of  $\alpha_t^{n_k}$ to $\alpha_t^{(0)}$ as $k\to \infty$ yields that
\[\sigma\{\alpha_t^{(0)}\}\subset \bigcap_{k\geq 1}\F_{-k,t}^\alpha\subset \bigcap_{k\geq 1}\F_{-k,t}^X=\F_{-\infty,t}^X=\sigma\{X_t^{(0)}\}.\]
Therefore, $\alpha_t^{(0)}$ is $\sigma\{X_t^{(0)}\}$ measurable,  and hence there exists a measurable map $F_t:\R^d\to \pb(U)$ such that $\alpha_t^{(0)}=F_t(X_t^{(0)})$. Let $\F_t'=\sigma\big\{(W_r^{(0)}, X_r^{(0)},\alpha_r^{(0)}) ;\ 0\leq r\leq t\big\}$, $t\in[0,T]$. As a consequence, $\Theta^{(0)}=(\Omega',\F',\p', \{\F_t'\}_{t\geq 0}, W_\cdot^{(0)},X_\cdot^{(0)},\alpha_\cdot^{(0)}) $ is a Markovian feedback control.

\noindent \textbf{Step 3}.
By the boundedness of $b$ and $\sigma$, and $\mu$ in   $\pb_p(\R^d)$ for some $p>1$, there exists a constant $c_p$ such that
\begin{align*}
  \E|X_t^{(n_k)}|^p&\leq c_p\Big(\E|X_s^{(n_k)}|^p+\E\Big|\int_s^t b(r,X_r^{(n_k)},\law_{X_r^{(n_k)}}, \alpha_r^{(n_k)})\d r\Big|^p\\
  &\qquad +\E\Big(\int_s^t \|\sigma(r,X_r^{(n_k)},\law_{X_r^{(n_k)}})\|^2\d r\Big)^{\frac p2}\Big)\\
  &\leq c_p\big(\mu(|\cdot\,|^p)+ |b|^p_\infty(T-s)^p+\|\sigma\|^p_\infty (T-s)^{\frac{p}2}\big).
\end{align*}
Hence, $(X_t^{(n_k)})_{k\geq 1}$ is uniformly integrable in $L^1(\p)$. Together with \eqref{c-5}, this yields that
\begin{equation}\label{c-8.5}\lim_{k\to \infty}\W_1(\law_{X_t^{(n_k)}},\law_{X_t^{(0)}})\leq \lim_{k\to \infty}\E|X_t^{(n_k)}-X_t^{(0)}|=0.
\end{equation}
Then, by (H3) and \eqref{c-4},
\begin{align*}
V(0,\mu)&=\lim_{k\to \infty} J(0,\mu;\Theta_{n_k})\\
 &=\lim_{k\to \infty} \E_{\p'}\Big[\int_0^T \!\!f(r, X_r^{n_k}, \law_{X_r^{n_k}},\alpha_{r}^{n_k})\d r+ g(X_T^{n_k},\law_{X_T^{n_k}})\Big]\\
 &= \E_{\p'}\Big[\int_0^T\!\!f(r,X_r^{(0)},\law_{X_r^{(0)}}, \alpha_r^{(0)})\d r+g(X_T^{(0)}, \law_{X_T^{(0)}})\Big]\\
 &\geq V(0,\mu).
\end{align*}
Therefore, $\Theta^{(0)}$ is an optimal Markovian feedback control. The proof is complete.
\end{proof}

\begin{mylem}\label{lem-5}
Adopting the conditions and notations of Theorem \ref{thm-1}, we have:
\begin{itemize}
  \item[$\mathrm{(i)}$] $\dis  \lim_{k\to \infty} \int_0^t\la b(r,X_r^{n_k}, \law_{X_r^{n_k}},\cdot),\alpha_r^{n_k}\raa \d r
  =\int_0^t\la b(r,X_r^{(0)},\law_{X_r^{(0)}}, \cdot),\alpha_r^{(0)}\raa \d r,$ a.s..
  \item[$\mathrm{(ii)}$]
  $\dis \lim_{k\to \infty}\E\Big[\Big|\int_0^t\sigma(r,X_r^{n_k},\law_{ X_r^{n_k}})\d W_r^{n_k}-\int_0^t\sigma(r,X_r^{(0)}, \law_{X_r^{(0)}})\d W_r^{(0)}\Big|^2\Big]=0.$
\end{itemize}
\end{mylem}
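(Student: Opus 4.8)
The plan is to prove parts (i) and (ii) on the full-probability event on which the Skorokhod convergence $(W^{n_k}_\cdot,X^{n_k}_\cdot,\alpha^{n_k}_\cdot)\to(W^{(0)}_\cdot,X^{(0)}_\cdot,\alpha^{(0)}_\cdot)$ in $\mathcal{Y}$ holds; in particular $X^{n_k}_r\to X^{(0)}_r$ $\p'$-a.s.\ for every $r$ (so that $\overline{\W}_1(\law_{X^{n_k}_r},\law_{X^{(0)}_r})\to 0$, and, using the joint continuity in (H4)(ii), $\sigma(r,X^{n_k}_r,\law_{X^{n_k}_r})\to\sigma(r,X^{(0)}_r,\law_{X^{(0)}_r})$), while the associated measures $\bar\mu^{n_k}$ converge weakly to $\bar\mu^{(0)}$ on $[0,T]\times U$; throughout I use the boundedness of $b$ and $\sigma$. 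For part (i) I would peel off the dependence in three stages. First, by (H4)(i), $\bigl|\la b(r,X^{n_k}_r,\law_{X^{n_k}_r},\cdot)-b(r,X^{n_k}_r,\law_{X^{(0)}_r},\cdot),\alpha^{n_k}_r\raa\bigr|\le C\,\overline{\W}_1(\law_{X^{n_k}_r},\law_{X^{(0)}_r})$, which is bounded and tends to $0$ for every $r$, so dominated convergence freezes the law at $\law_{X^{(0)}_r}$ up to a vanishing error. Second, for each fixed $r$ the continuity of $(x,z)\mapsto b(r,x,\law_{X^{(0)}_r},z)$ and the compactness of $U$ give $\sup_{z\in U}\bigl|b(r,X^{n_k}_r,\law_{X^{(0)}_r},z)-b(r,X^{(0)}_r,\law_{X^{(0)}_r},z)\bigr|\to 0$, again bounded by $2|b|_\infty$, so a further application of dominated convergence freezes the state at $X^{(0)}_r$. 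It then remains to show $\int_0^t\la\psi(r,\cdot),\alpha^{n_k}_r\raa\d r\to\int_0^t\la\psi(r,\cdot),\alpha^{(0)}_r\raa\d r$ with $\psi(r,z):=b(r,X^{(0)}_r,\law_{X^{(0)}_r},z)$ bounded, continuous in $z$, measurable in $r$. Since $\bar\mu^{(0)}$ has absolutely continuous time-marginal, the weak convergence $\bar\mu^{n_k}\to\bar\mu^{(0)}$ restricts to weak convergence of $\alpha^{n_k}_r(\d z)\d r$ to $\alpha^{(0)}_r(\d z)\d r$ on $[0,t]\times U$; as $\psi$ is only a Carathéodory integrand, I would fix $\veps>0$, use Lusin's theorem to obtain a closed $K_\veps\subset[0,t]$ with $|[0,t]\setminus K_\veps|<\veps$ on which $\psi$ is jointly continuous, extend it by Tietze's theorem to some $\widetilde\psi\in C_b([0,t]\times U)$, bound each of the two $\psi$-versus-$\widetilde\psi$ errors by $2|b|_\infty\veps$ (both relaxed controls having time-marginal equal to Lebesgue measure on $[0,t]$), apply weak convergence to $\widetilde\psi$, and let $\veps\downarrow 0$.

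For part (ii) the point is to identify the $L^2$-limit of the stochastic integrals. Set $M^{n_k}_t:=\int_0^t\sigma(r,X^{n_k}_r,\law_{X^{n_k}_r})\d W^{n_k}_r$, so that by \eqref{c-7} one has $M^{n_k}_t=X^{n_k}_t-X^{n_k}_0-\int_0^t\la b(r,X^{n_k}_r,\law_{X^{n_k}_r},\cdot),\alpha^{n_k}_r\raa\d r$; by (i) and the a.s.\ convergence of $X^{n_k}$ this converges $\p'$-a.s.\ to $N_t:=X^{(0)}_t-X^{(0)}_0-\int_0^t\la b(r,X^{(0)}_r,\law_{X^{(0)}_r},\cdot),\alpha^{(0)}_r\raa\d r$, and since $\sigma$ is bounded, Burkholder--Davis--Gundy bounds $\E|M^{n_k}_t|^{2p}$ uniformly in $k$ for each $p\ge1$, whence $\{|M^{n_k}_t|^2\}_k$ is uniformly integrable and $M^{n_k}_t\to N_t$ also in $L^2$. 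It thus suffices to show $N_t=\int_0^t\sigma(r,X^{(0)}_r,\law_{X^{(0)}_r})\d W^{(0)}_r$. To this end I would put $\F'_t:=\sigma\{(W^{(0)}_r,X^{(0)}_r,\alpha^{(0)}_r):0\le r\le t\}$ and verify, by passing to the limit in the corresponding identities for the $n_k$-th system, that $W^{(0)}$ is an $\F'_t$-Brownian motion, that $N$ is a continuous square-integrable $\F'_t$-martingale with $N_0=0$, and that for all $i,j$ both $N^iN^j-\int_0^\cdot a_{ij}(r,X^{(0)}_r,\law_{X^{(0)}_r})\d r$ and $N^iW^{(0),j}-\int_0^\cdot\sigma_{ij}(r,X^{(0)}_r,\law_{X^{(0)}_r})\d r$ are $\F'_t$-martingales. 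Each such passage tests the martingale relation for the $n_k$-th system against bounded continuous functionals of the paths up to time $s$ and survives the limit because $W^{n_k},M^{n_k},X^{n_k}$ converge a.s.\ and in $L^2$, the correctors $a_{ij}(r,X^{n_k}_r,\law_{X^{n_k}_r})$ and $\sigma_{ij}(r,X^{n_k}_r,\law_{X^{n_k}_r})$ converge boundedly pointwise in $r$ by (H4)(ii), and all the products involved converge in $L^1$. By uniqueness of the predictable covariation this forces $\la N^i,N^j\raa_t=\int_0^t a_{ij}(r,X^{(0)}_r,\law_{X^{(0)}_r})\d r$ and $\la N^i,W^{(0),j}\raa_t=\int_0^t\sigma_{ij}(r,X^{(0)}_r,\law_{X^{(0)}_r})\d r$; expanding $\E\bigl|N_t-\int_0^t\sigma(r,X^{(0)}_r,\law_{X^{(0)}_r})\d W^{(0)}_r\bigr|^2$ and inserting these identities together with the It\^o isometry, all three resulting terms equal $\E\int_0^t\|\sigma(r,X^{(0)}_r,\law_{X^{(0)}_r})\|_{\hs}^2\d r$ and cancel, so $N_t=\int_0^t\sigma(r,X^{(0)}_r,\law_{X^{(0)}_r})\d W^{(0)}_r$, and therefore $\int_0^t\sigma(r,X^{n_k}_r,\law_{X^{n_k}_r})\d W^{n_k}_r\to\int_0^t\sigma(r,X^{(0)}_r,\law_{X^{(0)}_r})\d W^{(0)}_r$ in $L^2$.

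The main obstacle is the limit identification in part (ii): the Brownian motions $W^{n_k}$ and the limit $W^{(0)}$ are a priori unrelated random elements on the Skorokhod space, so one cannot directly compare the two stochastic integrals and must instead reconstruct the full martingale and cross-variation structure of the limit and invoke uniqueness of the Doob--Meyer decomposition. In part (i) the analogous difficulty, namely the ``moving'' and merely Carathéodory nature of the integrand $b(r,X^{n_k}_r,\law_{X^{n_k}_r},\cdot)$, is milder and is dealt with by the successive freezing of law and state followed by a Lusin--Tietze regularization.
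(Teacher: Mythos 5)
Your proof is correct. For part (i) you follow essentially the paper's route---almost sure convergence of the relaxed controls in $\mathscr{U}$ combined with freezing the state and the law by dominated convergence---except that where the paper reduces to test functions of product form $\mathbf{1}_{[s,t]}(r)\phi(z)$ via continuous approximations $\beta_m$ of the time indicator, you treat the general Carath\'eodory integrand $\psi(r,z)=b(r,X^{(0)}_r,\law_{X^{(0)}_r},z)$ directly by a Lusin--Tietze regularization; this is if anything more complete, since the paper leaves the passage from product-form test functions to general integrands implicit. For part (ii) you take a genuinely different route. The paper compares the two stochastic integrals head-on: it splits the difference into a ``same integrator, different integrand'' term $I$, killed by the It\^o isometry and dominated convergence, and a ``same integrand, different integrator'' term $I\!I$, killed by a time discretization of $r\mapsto\sigma(r,X^{(0)}_r,\law_{X^{(0)}_r})$ together with the $L^2$-convergence of the Gaussian increments $W^{n_k}_t-W^{(0)}_t$. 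You instead never integrate one system's integrand against the other system's Brownian motion: you read off $M^{n_k}_\cdot=X^{n_k}_\cdot-X^{n_k}_0-\int_0^\cdot\la b,\alpha^{n_k}_r\raa\,\d r$ from \eqref{c-7}, upgrade the almost sure convergence $M^{n_k}_t\to N_t$ to $L^2$ by uniform integrability, and identify $N_\cdot$ with $\int_0^\cdot\sigma(r,X^{(0)}_r,\law_{X^{(0)}_r})\,\d W^{(0)}_r$ by passing the martingale and covariation relations to the limit and expanding the $L^2$-distance, which vanishes by cancellation of the three bracket terms. This is the classical Stroock--Varadhan/Kushner-style limit identification; it buys you freedom from the delicate points in the paper's treatment of $I\!I$ (the integral $\int_0^t\sigma(r,X^{(0)}_r,\law_{X^{(0)}_r})\,\d W^{n_k}_r$ presupposes that $X^{(0)}$ is adapted to the filtration of $W^{n_k}$, and the independence invoked in \eqref{c-15} needs justification), at the price of invoking the martingale-problem machinery and uniqueness of the predictable covariation; it also delivers the limiting equation \eqref{c-6} as a byproduct rather than as a separate step. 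Both arguments rely essentially on the boundedness of $b$ and $\sigma$ and, for the bracket (respectively discretization) step, on the continuity of the paths of $X^{(0)}$.
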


\begin{proof}
  (i) As $\alpha_\cdot^{n_k}$ converges almost surely to $\alpha^{(0)}_\cdot$ in $\mathscr{U}$, this yields that for any bounded continuous function $[0,T]\times U\ni(r,z)\mapsto h(r,z),$
  \begin{equation}\label{c-9}
  \lim_{k\to \infty} \int_0^T\int_U\!h(r,z) \alpha_r^{n_k}(\d z)\d r =\int_0^T\!\int_U\! h(r,z) \alpha_r^{(0)}(\d z)\d r,
  \quad \p'\text{-a.s.}.
  \end{equation}
We shall show that, for any   $\phi\in C_b(U)$, for every $0\leq s<t\leq T$,
\begin{equation} \label{c-17}
\begin{split}
\lim_{k\to \infty}\! \int_s^t\! \!\int_U\phi(z)\alpha_r^{n_k}(\d z)\d r&=\lim_{k\to \infty}\!\int_0^T\!\!\int_U\!\mathbf1_{[s,t]}(r) \phi(z)\alpha_r^{n_k}(\d z)\d r\!\\
&=\! \int_s^t\!\!\int_U\phi(z)\alpha_r^{(0)} (\d z)\d r,\  \quad  \p'\text{-a.s.}.
\end{split}
\end{equation}
  Since the indicator function $r\mapsto\mathbf1_{[s,t]}(r)$ is not continuous, we use smooth functions to approximate it.
  For any $m\in \N$, there exists a continuous function $\beta_m:[0,T]\to [0,1]$ with support in $[s,t]$ such that
  \[{Leb}\big\{r\in [0,T]; |\beta_m(r) -\mathbf1_{[s,t]}(r)|>0\big\} <\frac{1}{2m|\phi|_\infty},
  \] where $|\phi|_\infty=\sup_{z\in U}|\phi(z)|<\infty$ and $ {Leb}$ denotes the Lebesgue measure over $\R$.
  We derive that
  \begin{equation}\label{c-10}
    \begin{split}
     &\Big|\int_s^t\!\int_U\!\phi(z)\alpha_r^{(0)}(\d z) \d r-\int_0^T\!\int_U\beta_m(r)\phi(z)\alpha_r^{(0)} (\d z)\d r\Big|\leq \frac 1m,\\
     &\Big|\int_s^t\!\int_U\!\phi(z)\alpha_r^{n_k}(\d z)\d r-\int_0^T\!\int_U\!\beta_m(r)\phi(z)\alpha_r^{n_k} (\d z)\d r\Big|\leq \frac 1m, \ k\geq 1.
    \end{split}
  \end{equation}
  By \eqref{c-9}, there exists $K\in \N$ such that for any $k\geq K$
  \begin{equation}\label{c-11}
  \Big|\int_0^T\!\int_U\beta_m(r)\phi(z)\alpha_r^{n_k} (\d z)\d r-\int_0^T\!\int_U\!\beta_m(r)\phi(z)\alpha_r^{(0)} (\d z)\d r\Big|\leq \frac 1m,\quad \p'\text{-a.s.}.
  \end{equation}
  Combining \eqref{c-11} with \eqref{c-10}, we get that for any $k\geq K$,
  \begin{equation*}
    \Big|\int_s^t\!\int_U\! \phi(z)\alpha_r^{n_k} (\d z)\d r-\int_s^t\!\int_U\!\phi(z)\alpha_r^{(0)} (\d z)\d r\Big|\leq \frac{3}{m},\quad \p'\text{-a.s.}.
  \end{equation*}
  The equation \eqref{c-17} follows immediately by letting $m\to \infty$.

  Consequently, assertion (i) follows from \eqref{c-17}, $X_t^{n_k}\to X_t^{(0)}$ a.s. as $k\to\infty$, and the bounded continuity of $(x,\mu,\alpha)\mapsto b(t,x,\mu,\alpha)$.

  (ii) 
 Note that
  \begin{align*}
    &\E\Big|\int_0^t\!\sigma(r,X_r^{n_k},\law_{X_r^{n_k}}) \d W_r^{n_k}-\int_0^t\!\sigma(r,X_r^{(0)},\law_{X_r^{ (0)}})\d W_r^{(0)}\Big|^2\\
    &\leq 2\E\Big|\int_0^t\!\sigma(r,X_r^{n_k}, \law_{X_r^{n_k}}) \d W_r^{n_k}-\int_0^t\!\sigma(r,X_r^{(0)},\law_{X_r^{ (0)}})\d W_r^{n_k}\Big|^2 \\
    &\quad +2\E\Big|\int_0^t \!\sigma(r,X_r^{(0)},\law_{X_r^{ (0)}})\d W_r^{n_k}- \int_0^t\!\sigma(r,X_r^{(0)},\law_{X_r^{ (0)}})\d W_r^{(0)}\Big|^2\\
    &=:2(I+I\!I).
  \end{align*}
  It follows from  the dominated convergence theorem, \eqref{c-8.5}, and the continuity of $\sigma$ that
  \begin{equation}\label{c-12}
  \lim_{k\to \infty} I =\lim_{k\to \infty}\E\Big[\int_0^t\!\big|\sigma(r, X_r^{n_k},\law _{X_r^{n_k}})-\sigma(r,X_r^{(0)},\law_{X_r^{(0)}}) \big|^2\d r\Big]=0.
  \end{equation}
  To deal with the term $I\!I$, we use the time discretization method. For an integer $N\geq 1$, let $t_l=\frac{l}{N} t$, $l=0,1,\ldots,N$, and
  \[\eta_N(t)=t_l, \quad \text{if $t\in [t_l, t_{l+1})$}.\]
  Then,
  \begin{equation}\label{c-13}
  \begin{split}
    I\!I&\leq 3\Big\{\E\Big|\sum_{l=0}^{N-1}\int_{t_l}^{t_{l+1}}\! \big(\sigma(r, X_r^{(0)},\law_{X_r^{(0)}})-\sigma (t_l,X_{t_l}^{(0)}, \law_{X_{t_l}^{(0)}}) \big)\d W_r^{n_k}\Big|^2\\
    &\quad +\E\Big|\sum_{l=0}^{N-1}\int_{t_l}^{t_{l+1}}\!\! \big(\sigma(r, X_r^{(0)},\law_{X_r^{(0)}})-\sigma (t_l,X_{t_l}^{(0)}, \law_{X_{t_l}^{(0)}}) \big)\d W_r^{(0)}\Big|^2\\
    &\quad+\E\Big|\sum_{l=0}^{N-1}\int_{t_l}^{t_{l+1}}\!\! \sigma(t_l,X_{t_l}^{(0)}, \law_{X_{t_l}^{(0)}}) \d\big( W_r^{n_k}-W_r^{(0)}\big)\Big|^2\Big\}
    \\
    &= 3\Big\{2\E\Big[\int_0^T\!\!\big|\sigma(r, X_r^{(0)},\law_{X_r^{(0)}})-\sigma(\eta_N(r), X_{\eta_N(r)}^{(0)},\law_{X_{\eta_N(r)}^{(0)}}) \big|^2\d r\Big]\\
    &\quad+\E\Big|\sum_{l=0}^{N-1}\int_{t_l}^{t_{l+1}} \!\! \sigma(t_l, X_{t_l}^{(0)}, \law_{X_{t_l}^{(0)}})\d\big(W_r^{n_k}-W_r^{(0)}\big)
    \Big|^2\Big\}.
  \end{split}
  \end{equation}
  Using the dominated convergence theorem again, by virtue of  the continuity of the function $\sigma(t,x,\mu)$ and the paths of $(X_t^{(0)})_{t\in [0,T]}$,
  \begin{equation}\label{c-14}
  \lim_{N\to \infty} \E\Big[\int_0^T\!\!\big|\sigma(r, X_r^{(0)},\law_{X_r^{(0)}})-\sigma(\eta_N(r), X_{\eta_N(r)}^{(0)},\law_{X_{\eta_N(r)}^{(0)}}) \big|^2\d r\Big]=0.
  \end{equation}
  Applying the independent increment property of Brownian motion and the fact $X_t^{(0)}$ is $\F_t$-adapted, we have
  \begin{equation}\label{c-15}
  \begin{split}
    &\E\Big|\sum_{l=0}^{N-1}\int_{t_l}^{t_{l+1}} \!\! \sigma(t_l, X_{t_l}^{(0)}, \law_{X_{t_l}^{(0)}})\d\big(W_r^{n_k}-W_r^{(0)}\big)
    \Big|^2\\
    &=\sum_{l=0}^{N-1}\!\E\Big[\sigma(t_l, X_{t_l}^{(0)}, \law_{X_{t_l}^{(0)}})^2\E\big[
    (W_{t_{l+1}}^{n_k}-W_{t_l}^{n_k}+W_{t_l}^{(0)} -W_{t_{l+1}}^{(0)}\big)^2\big]\Big]\\
    &\leq  \sum_{l=0}^{N-1}\!\E\Big[\sigma(t_l, X_{t_l}^{(0)}, \law_{X_{t_l}^{(0)}})^2\E\big[2(W_{t_{l+1}}^{n_k} -W_{t_{l+1}}^{(0)})^2+2(W_{t_{l }}^{n_k}- W_{t_{l }}^{(0)})^2\big]\Big].
  \end{split}
  \end{equation}
  Note that for the Brownian motion, the almost sure convergence of $W_t^{n_k}$ to $W_t^{(0)}$ implies that $\lim_{k\to \infty}\E\big[|W_t^{n_k}-W_t^{(0)}|^2\big]=0$ (cf. \cite{QG97}).
  Thus, for fixed $N\in \N$, it follows from \eqref{c-14} that
  \begin{equation}\label{c-16}
  \lim_{k\to \infty}\E\Big|\sum_{l=0}^{N-1}\int_{t_l}^{t_{l+1}} \!\! \sigma(t_l, X_{t_l}^{(0)}, \law_{X_{t_l}^{(0)}})\d\big(W_r^{n_k}-W_r^{(0)}\big)
    \Big|^2=0.
  \end{equation}
  Consequently, for any $\veps>0$, from \eqref{c-14}, we first choose an integer $N$ such that
  \[\E\Big[\int_0^T\!\!\big|\sigma(r, X_r^{(0)},\law_{X_r^{(0)}})-\sigma(\eta_{N}(r), X_{\eta_N(r)}^{(0)},\law_{X_{\eta_N(r)}^{(0)}}) \big|^2\d r\Big]\leq \frac{\veps}{12},\]
  then choose $K$ large enough such that for any $k\geq K$
  \begin{equation*}
    \E\Big|\sum_{l=0}^{N-1}\int_{t_l}^{t_{l+1}} \!\! \sigma(t_l, X_{t_l}^{(0)}, \law_{X_{t_l}^{(0)}})\d\big(W_r^{n_k}-W_r^{(0)}\big)
    \Big|^2\leq \frac{\veps}{6}.
  \end{equation*}
  Inserting the previous two estimates into \eqref{c-13}, using the arbitrariness of $\veps>0$,  we finally obtain that
  \[\lim_{k\to \infty} I\!I=0.\]
  Invoking \eqref{c-12}, we finally get
  \[\lim_{k\to \infty} \E\Big|\int_0^t\!\sigma(r,X_r^{n_k},\law_{X_r^{n_k}}) \d W_r^{n_k}-\int_0^t\!\sigma(r,X_r^{(0)},\law_{X_r^{ (0)}})\d W_r^{(0)}\Big|^2=0,\]
  and this lemma is proved.
\end{proof}
\begin{myrem}\label{rem-2}
In the argument of Lemma \ref{lem-5}, equation \eqref{c-14} uses the continuity of the paths of the process $(X_t^{(0)})_{t\in [0,T]}$. Since the optimal feedback control  $ \alpha_t^{(0)}$ may be discontinuous in $t$, our argument in Lemma \ref{lem-5} is invalid when    the diffusion coefficient $\sigma$ contains the term of control strategy $\alpha$.
\end{myrem}

\section{Dynamic programming principle and continuity of value function}

In this part we aim to establish the dynamic programming principle and study the continuity of the value function.
\subsection{Dynamic Programming Principle}

\begin{myprop}\label{lem-3}
Suppose that $\mathrm{(H1)}$ and $\mathrm{(H2)}$ hold. Then, for any $s\leq t\leq T$, $\mu\in \pb_1(\R^d)$,  it holds
\begin{equation}\label{b-1}
V(s,\mu)=\inf_{\Theta\in \Pi_{s,\mu}}\Big\{ \E_{s,\mu}\Big[\int_s^t\!f(r,X^{s,\mu,\alpha}_r, \law_{X^{s,\mu,\alpha}_r},\alpha_r)\d r+V(t, \law_{X^{s,\mu,\alpha}_t})\Big]\Big\},
\end{equation} where  $X_\cdot^{s,\mu,\alpha}$ stands for  the controlled process associated with $\Theta\in \Pi_{s,\mu}$.
\end{myprop}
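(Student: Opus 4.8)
The plan is to establish \eqref{b-1} by proving two inequalities; write $\widehat V(s,\mu)$ for its right-hand side, and restrict to the nontrivial case in which all the value functions appearing are finite (under (H1)--(H2) and the finiteness of first moments this is automatic once the running and terminal costs have linear growth). The structural feature I would exploit repeatedly is that, for a \emph{fixed} $\Theta\in\Pi_{s,\mu}$ with feedback maps $\alpha_r=F_r(X_r)$, the time-$t$ marginal $\nu_t:=\law_{X^{s,\mu,\alpha}_t}$ is \emph{deterministic}: by Definition \ref{def-1}(iii) it is pinned down by $\mu$ and $(F_r)_{r\le t}$ alone, exactly as in the well-posedness discussion of Section~2. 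Hence $V(t,\nu_t)$ is a number, $\E_{s,\mu}[V(t,\law_{X^{s,\mu,\alpha}_t})]=V(t,\nu_t)$, and -- in sharp contrast with classical stochastic control -- no measurable selection of $\veps$-optimal controls in a random initial state is needed; this is what makes the DPP here comparatively soft.

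\textbf{Step 1: $V(s,\mu)\ge\widehat V(s,\mu)$.} Fix $\Theta\in\Pi_{s,\mu}$ and split the cost in \eqref{a-3} at time $t$:
\[
J(s,\mu;\Theta)=\E_{s,\mu}\Big[\int_s^t f(r,X_r,\law_{X_r},\alpha_r)\,\d r\Big]+\E_{s,\mu}\Big[\int_t^T f(r,X_r,\law_{X_r},\alpha_r)\,\d r+g(X_T,\law_{X_T})\Big].
\]
I would then invoke the flow property of McKean--Vlasov SDEs (\cite{BLPR}): the triple $(W_r,X_r,F_r)_{r\in[t,T]}$ with the restricted filtration is a Markovian feedback control in $\Pi_{t,\nu_t}$ -- weak existence is inherited by restriction, and uniqueness in law on $[t,T]$ from the initial datum $\nu_t$ follows from the flow property together with the uniqueness in law that is part of $\Theta\in\Pi_{s,\mu}$ -- and moreover $\law_{X_r}$, $r\in[t,T]$, is precisely the law-flow issued from $\nu_t$ at time $t$. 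Therefore the second term equals $J(t,\nu_t;\Theta|_{[t,T]})\ge V(t,\nu_t)$, so $J(s,\mu;\Theta)\ge\E_{s,\mu}[\int_s^t f\,\d r+V(t,\nu_t)]\ge\widehat V(s,\mu)$; taking the infimum over $\Theta$ gives the inequality.

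\textbf{Step 2: $V(s,\mu)\le\widehat V(s,\mu)$.} Fix $\Theta\in\Pi_{s,\mu}$ and $\veps>0$ and set $\nu_t=\law_{X^{s,\mu,\alpha}_t}$. Since $\nu_t\in\pb_1(\R^d)$ is a single deterministic measure, I may pick $\Theta^\veps=(\Omega^\veps,\F^\veps,\p^\veps,\{\F^\veps_r\},W^\veps,X^\veps,\alpha^\veps)\in\Pi_{t,\nu_t}$ with $\alpha^\veps_r=F^\veps_r(X^\veps_r)$ and $J(t,\nu_t;\Theta^\veps)\le V(t,\nu_t)+\veps$. I then form the concatenated feedback $\widetilde F_r:=F_r$ on $[s,t]$, $\widetilde F_r:=F^\veps_r$ on $(t,T]$, and the concatenated control $\widetilde\Theta$: run $\Theta$ on $[s,t]$, and from time $t$ solve the McKean--Vlasov SDE with feedback $\widetilde F$ and initial datum $X_t$ (of law $\nu_t$), realised on an enlargement of the original basis carrying an independent Brownian increment or, equivalently, by pasting laws on the canonical space $\mathcal Y$ of \eqref{c-1} through regular conditional distributions. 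The flow property of \cite{BLPR} together with the uniqueness in law assumed for $\Theta^\veps$ ensures both that $\widetilde\Theta$ is a genuine Markovian feedback control, i.e. $\widetilde\Theta\in\Pi_{s,\mu}$, and that its law-flow on $[t,T]$ coincides with $\law_{X^\veps_\cdot}$, whence $J(s,\mu;\widetilde\Theta)=\E_{s,\mu}[\int_s^t f(r,X_r,\law_{X_r},\alpha_r)\,\d r]+J(t,\nu_t;\Theta^\veps)$. Thus $V(s,\mu)\le J(s,\mu;\widetilde\Theta)\le\E_{s,\mu}[\int_s^t f\,\d r+V(t,\nu_t)]+\veps$; taking the infimum over $\Theta$ and then $\veps\downarrow0$ completes the proof.

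\textbf{Main obstacle.} The delicate point is the concatenation of Step 2 (and, in weaker form, the restriction of Step 1): one must construct the glued weak solution on a single stochastic basis and verify that $\widetilde\Theta$ is admissible, i.e. that weak existence \emph{and}, above all, uniqueness in law hold on $[s,T]$ for the feedback $\widetilde F$, which is merely measurable and discontinuous in time. Under (H1)--(H2) the coefficients $b,\sigma$ are Lipschitz, but the effective drift $b(r,x,\mu,\widetilde F_r(x))$ need not be, so this cannot be read off from classical well-posedness and must instead be extracted from the flow/cocycle property of McKean--Vlasov SDEs (\cite{BLPR}) together with the uniqueness in law built into membership of $\Pi_{s,\mu}$ and $\Pi_{t,\nu_t}$. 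Carrying out the measure-theoretic pasting carefully -- for instance on $\C([0,T];\R^d)\times\C([0,T];\R^d)\times\mathscr U$ via regular conditional probabilities, as in the canonical-space formalism of Section~3 -- is where the real work lies.
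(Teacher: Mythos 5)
Your proposal is correct and follows essentially the same route as the paper: the same two-inequality decomposition, the flow property of \cite{BLPR} for the restriction in Step 1, the observation that the deterministic marginal $\law_{X_t^{s,\mu,\alpha}}$ obviates any measurable selection, and the concatenation in Step 2 via regular conditional probabilities on the canonical space $\mathcal{Y}$ (the paper cites \cite[Lemma 3.3]{Haus} for exactly this pasting). The ``main obstacle'' you flag --- verifying admissibility of the glued control --- is indeed the point the paper treats most lightly, so your emphasis there is well placed but does not constitute a different approach.
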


\begin{proof}
  Denote by $\wt{V}(s,\mu)$ the right-hand side of \eqref{b-1}. We first prove that $V(s,\mu)\geq \wt{V}(s,\mu)$ for every $(s,\mu)\in [0,T)\times \pb_p(\R^d)$. Let $(X_t^{s,\mu,\alpha})_{t\in [s,T]}$ be the solution to \eqref{a-1}. Then,   by the flow property of SDE \eqref{a-1} (cf. \cite{BLPR}), it holds
  \[X_r^{s,\mu,\alpha}=X_r^{t, \law_{X_t^{s,\mu,\alpha}},\alpha},\quad r\in [t,T].\]
  Therefore, for any $\veps>0$, there exists a feedback control $\Theta_\veps\in \Pi_{s,\mu}$ such that
  \begin{align*}
  V(s,\mu)&\geq \E_{s,\mu}\Big[\int_s^T\!f(r, X_r^{s,\mu,\alpha}, \law_{X_r^{s,\mu,\alpha}},\alpha_r)\d r+g(X_T^{s,\mu, \alpha},\law_{X_T^{s,\mu,\alpha}}) \Big]-\veps.\\
  &\geq \E_{s,\mu}\Big[\int_s^t f(r, X_r^{s,\mu,\alpha}, \law_{X_r^{s,\mu,\alpha}},\alpha_r)\d r+\!\int_t^T f(r, X_r^{s,\mu,\alpha}, \law_{X_r^{s,\mu,\alpha}},\alpha_r)\d r\\
  &\quad \qquad \qquad  + g(X_T^{s,\mu, \alpha},\law_{X_T^{s,\mu,\alpha}})\Big]-\veps\\
  &\geq \E_{s,\mu}\Big[\int_s^t\!f(r,X_r^{s,\mu,\alpha}, \law_{X_r^{s,\mu,\alpha}},\alpha_r)\d r+V(t,\law_{X_t^{s,\mu,\alpha}})\Big]-\veps\\
  &\geq \wt{V}(s,\mu)-\veps.
  \end{align*} Letting $\veps\to 0$, we obtain that $V(s,\mu)\geq \wt{V}(s,\mu)$.

  Secondly, for any $\veps\!>0$, by the definition of $\wt{V}(s,\mu)$, there exists a   control $\Theta=(\Omega,\F,\p,$ $ \{\F_t\}_{t\geq 0}, W^\alpha_\cdot, X_\cdot^{s,\mu,\alpha}, \alpha_\cdot)\in \Pi_{s,\mu}$ such that
  \begin{equation}\label{b-2}
    \veps+\wt{V}(s,\mu) \geq \E_{s,\mu}\Big[\int_s^t f(r,X_r^{s,\mu,\alpha}, \law_{X_r^{s,\mu,\alpha}},\alpha_r)\d r+V(t,\law_{X_t^{s,\mu,\alpha}})\Big].
  \end{equation}
  Then, by the definition of $V(t,\law_{X_t^{s,\mu,\alpha}})$, there exists a   $\wt\Theta=(\wt\Omega, \tilde \F,\tilde\p, \{\tilde \F_t\}_{t\geq 0}$, $ W_\cdot^{\tilde \alpha}, X_\cdot^{\tilde \alpha},\tilde \alpha_\cdot)\in \Pi_{t, \law_{X_t^{s,\mu,\alpha}}}$ with $(\tilde\alpha_r)_{r\in [t,T]}$ representing the associated control strategy such that
  \begin{equation}\label{b-3}
  V(t,\law_{X_t^{s,\mu,\alpha}})\geq \! \E_{t,\law_{X_t^{s,\mu,\alpha}}}\!\Big[\int_t^T\!\! \!f( r, X_r^{\tilde \alpha}, \law_{X_r^{\tilde \alpha}}, \tilde \alpha_r)\d r \!+\! g\big(X_T^{\tilde \alpha}, \law_{X_T^{\tilde\alpha}}\big)\Big]\!-\!\veps .
  \end{equation}
  Notice that  since $(t,\law_{X_t^{s,\mu,\alpha}})$ is deterministic, we do not need to use the measurable selection theorem in this step.
  Denote by $R_s=\p\circ\Psi_\alpha^{-1}$ and $Q_t=\p\circ\Psi_{\tilde\alpha}^{-1}$. By virtue of \cite[Lemma 3.3]{Haus}, there exists a unique probability, denoted by $R_s\otimes_t Q_t$ on $\mathcal{Y}$ such that
  \begin{itemize}
    \item[(1)] $(R_s\otimes_t Q_t)(A)=R_s(A)$,  $\forall\,A\in \mathcal{Y}_t$.
    \item[(2)] the regular conditional probability distribution of $R_s\otimes_t Q_t$ with respect to $\mathcal{Y}_t$ is $\delta_\omega\otimes_t Q_t$, where $\delta_\omega\otimes_t Q_t$ is the unique probability measure on $\mathcal{Y}$ such that
        \begin{align*}
          &(\delta_\omega\otimes_t Q_t)\big(\{\tilde \omega\in \mathcal{Y}; \tilde \omega_r=\omega_r, \ 0\leq r\leq t\}\big)=1;\\
          &(\delta_\omega\otimes_t Q_t)(A)=Q_t(A), \quad \forall\, A\in \wt{\mathcal{Y}}^t:=\sigma\{\omega_r; \,r\in [t, T]\}.
        \end{align*}
  \end{itemize}
  Combining this fact with \eqref{b-2}, \eqref{b-3}, we obtain that
  \begin{equation*}
    \begin{split}
      \veps+\wt{V}(s,\mu)&\geq \E_{R_s\otimes Q_t}\Big[ \int_s^T\!f(r, X_r, \law_{X_r},\alpha_r)\d r+ g(X_T, \law_{X_T})\Big]-\veps\\
      &\geq V(s,\mu)-\veps.
    \end{split}
  \end{equation*}
  Letting $\veps\to 0$, we finally get $\wt{V}(s,\mu)\geq V(s,\mu)$, and hence $\wt{V}(s,\mu)=V(s,\mu)$. This completes the proof.
\end{proof}


\subsection{Continuity of value function}
  We proceed to investigate the regularity of the value function $V$.

\begin{mylem}\label{lem-1}
Assume that $\mathrm{(H1)}$, $\mathrm{(H2)}$ hold and $\sigma(t,x,\mu)$ depends only on $t,x$. For any $\pb(U)$-valued $\F_t$-adapted process $(\alpha_t)_{t\in [s,T]}$, and random variables $\xi,\tilde \xi$ with $\E\big(|\xi|+|\tilde \xi|\big)<\infty$, consider the SDEs
\begin{align*}
  \d X_t&=b(t,X_t,\law_{X_t},\alpha_t)\d t+\sigma(t,X_t)\d W_t,\quad X_s=\xi,\\
  \d \wt X_t&=b(t,\wt{X}_t,\law_{\wt{X}_t},\alpha_t)\d t+\sigma(t,\wt{X}_t)\d W_t,\quad \wt{X}_s=\tilde \xi.
\end{align*} Then the following assertions hold:
\begin{itemize}
  \item[$\mathrm{(i)}$] For $0\leq s<T$,
  \begin{equation}\label{a-6}
  \E\Big[\sup_{s\leq t\leq T} |X_t| \Big]<\infty.
  \end{equation}
  \item[$\mathrm{(ii)}$] There exists a constant $K_5>0$ such that
  \begin{equation}\label{a-7}
   \sup_{s\leq r\leq t}\W_1(\law_{X_r},\law_{\wt{X}_r}) \leq \E\sup_{s\le r\leq t}|X_r-\wt{X}_r|\leq\big(\E|\xi-\tilde \xi| \big) \e^{K_5(t-s)},\quad t\in [s,T].
  \end{equation}
\end{itemize}
\end{mylem}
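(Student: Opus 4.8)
The plan is to treat this as a pair of standard SDE estimates, the only mild wrinkle being that the coefficient $b$ depends on the law of the solution, which under (H1)--(H2) causes no difficulty because the law-dependence enters only through $\W_1(\law_{X_r},\law_{Y_r})\le \E|X_r-Y_r|$. For part (i), I would first note that (H2), together with the fact that $\sigma$ depends only on $(t,x)$ and hence $\|\sigma(t,x)\|\le K_2(1+|x|)$, gives linear growth of both coefficients in $(x,\law_x)$. Applying Burkholder--Davis--Gundy to the stochastic integral and H\"older to the drift integral, one obtains for $t\in[s,T]$
\[
\E\Big[\sup_{s\le r\le t}|X_r|\Big]\le \E|\xi| + C\int_s^t\Big(1+\E\big[\sup_{s\le u\le r}|X_u|\big]+\int_{\R^d}|z|\law_{X_r}(\d z)\Big)\d r + C\Big(\int_s^t\big(1+\E|X_r|^2+\cdots\big)\d r\Big)^{1/2}.
\]
To make this rigorous one should first work with the stopped process $X_{r\wedge\tau_N}$, $\tau_N=\inf\{r:|X_r|\ge N\}$, to guarantee finiteness of all quantities, derive the Gronwall-type bound with constants independent of $N$, and then let $N\to\infty$ by monotone convergence; using $\int|z|\law_{X_r}(\d z)=\E|X_r|\le\E[\sup_{s\le u\le r}|X_u|]$ closes the inequality. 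Gronwall's lemma then yields \eqref{a-6} with a bound of the form $\big(1+\E|\xi|\big)e^{C(T-s)}$.

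For part (ii), set $\Delta_r=X_r-\wt X_r$ and write $\Delta_t=\Delta_s+\int_s^t\big(b(r,X_r,\law_{X_r},\alpha_r)-b(r,\wt X_r,\law_{\wt X_r},\alpha_r)\big)\d r+\int_s^t\big(\sigma(r,X_r)-\sigma(r,\wt X_r)\big)\d W_r$. Taking $\sup_{s\le r\le t}$, then expectation, and applying BDG to the martingale part and the Lipschitz bound (H1) to both the drift and diffusion differences, the law-dependent term contributes $K_1\W_1(\law_{X_r},\law_{\wt X_r})\le K_1\E|\Delta_r|\le K_1\,\E[\sup_{s\le u\le r}|\Delta_u|]$, so that
\[
\E\Big[\sup_{s\le r\le t}|\Delta_r|\Big]\le \E|\xi-\tilde\xi| + C\int_s^t \E\Big[\sup_{s\le u\le r}|\Delta_u|\Big]\d r,
\]
where the $\sqrt{t-s}$ from BDG is absorbed by first restricting to a small interval and iterating, or simply by keeping the square inside and using $\E[\sup|\Delta|]^2\le \E[\sup|\Delta|^2]$ and a Gronwall argument on $\E[\sup|\Delta_r|^2]$; either way one reaches an inequality to which Gronwall applies. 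This gives $\E[\sup_{s\le r\le t}|\Delta_r|]\le (\E|\xi-\tilde\xi|)e^{K_5(t-s)}$, and the first inequality in \eqref{a-7} is immediate from the coupling characterization of $\W_1$ together with $\W_1(\law_{X_r},\law_{\wt X_r})\le\E|\Delta_r|\le\E[\sup_{s\le u\le r}|\Delta_u|]$.

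The only genuine obstacle is a bookkeeping one: the stochastic integral term produces a $\sqrt{t-s}$ rather than $t-s$, which is not directly amenable to Gronwall's lemma on $\E[\sup|\cdot|]$. The clean fix, which I would adopt, is to run the whole argument at the level of $\E[\sup_{s\le r\le t}|\cdot|^2]$ (using BDG in the form $\E\sup|\int\sigma\,\d W|^2\le 4\E\int\|\sigma\|^2$ and Cauchy--Schwarz on the drift), obtain the Gronwall bound there, and then take square roots; the localization via $\tau_N$ in part (i) is also essential for a priori finiteness before Gronwall can be invoked. Everything else is routine and uses only (H1), (H2), and the elementary estimate $\W_1(\law_X,\law_Y)\le\E|X-Y|$.
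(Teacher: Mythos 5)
Your skeleton is the same as the paper's: linear growth and Lipschitz bounds from (H1)--(H2), Burkholder--Davis--Gundy on the martingale part, the elementary bound $\W_1(\law_{X_r},\law_{\wt X_r})\le\E|X_r-\wt X_r|$ to absorb the measure dependence of $b$, and Gronwall. The problem is the device you commit to for handling the square root produced by BDG. Running the argument at the level of $\E[\sup|\cdot|^2]$ and taking square roots at the end is incompatible with both the hypotheses and the conclusion: the lemma assumes only $\E(|\xi|+|\tilde\xi|)<\infty$, so $\E|\xi|^2$ and $\E|\xi-\tilde\xi|^2$ may be infinite and the $L^2$ Gronwall argument cannot even be initialized; and even when they are finite, taking square roots leaves $(\E|\xi-\tilde\xi|^2)^{1/2}$ on the right-hand side of \eqref{a-7}, which by Jensen dominates $\E|\xi-\tilde\xi|$ and is therefore not the claimed bound. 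The same objection applies to part (i), whose conclusion must hold for $\xi\in L^1$ only.

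The paper stays at the level of first moments throughout by using the pathwise estimate
\begin{equation*}
\int_s^t|\Delta_r|^2\,\d r\le\Big(\sup_{s\le r\le t}|\Delta_r|\Big)\int_s^t|\Delta_r|\,\d r,\qquad \Delta_r:=X_r-\wt X_r,
\end{equation*}
followed by Young's inequality $\sqrt{ab}\le\tfrac14 a+Cb$ inside the expectation; the resulting term $\tfrac14\E\big[\sup_{s\le r\le t}|\Delta_r|\big]$ is absorbed into the left-hand side, and Gronwall is applied to $\int_s^t\E|\Delta_r|\,\d r$ (the proof of (i) uses the identical trick on $1+|X_r|$). Your first alternative --- restricting to an interval short enough that the BDG constant times $\sqrt{t-s}$ is at most $1/2$ and iterating over finitely many subintervals --- also works and stays in $L^1$; had you adopted that instead of the $L^2$ route, the proof would be correct. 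Your localization by $\tau_N$ to guarantee a priori finiteness before any absorption step is a legitimate refinement that the paper glosses over.
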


\begin{proof}
  By (H2) and Burkh\"older-Davis-Gundy's inequality,
  \begin{align*}
    \E\big[\sup_{s\leq r\leq t}|X_r|\big]&\leq \E|X_s|\!+\!\E\Big[\int_s^t\! |b(r,X_r, \law_{X_r},\alpha_r)|\d r\Big] \!+\!\E\Big[\sup_{s\leq u\leq t}\Big|\int_s^u\!\!\sigma(r, X_r)\d W_r\Big|\Big]\\
    &\leq \E|\xi|\!+\!K_2\!\int_s^t\! (1\!+ \! 2\E|X_r|)\d r\!+\!C_1\E\Big[\Big(\int_s^t \!2K_2^2(1\!+\!|X_r|^2)\d r \Big)^{\frac 12}\Big]\\
    &\leq \E|\xi|\!+\!K_2(T\!-\!s) \! +\!\frac 14\E\big[\sup_{s\leq r\leq t}\!|X_r|\big]\!+\!C\!\int_s^t\!\E|X_r|\d r.
  \end{align*}
  It follows from Gronwall's inequality that
  \[\E\big[\sup_{s\leq r\leq t} |X_r|\big]\leq (\E|\xi|+C(T))\e^{C(T)(t-s)}\]
  for some constant $C(T)$ depending on $T$, which yields \eqref{a-6}.

  Applying (H1) and Burkh\"older-Davis-Gundy's inequality,
  we get
  \begin{align*}
    &\E\big[\sup_{s\leq r\leq t}|\wt{X}_r-X_r|\big]\\
    &\leq \E|\wt{X}_s\!-\!X_s|\!+\!2 K_1\!\int_s^t\!\E|\wt{X}_r\!-\!X_r|\d r\!+\!\E\Big[\sup_{s\leq u\leq t}\!\Big|\!\int_s^u\!\!\big(\sigma(r,\wt X_r)\!-\!\sigma(r,X_r)\big)\d W_r\Big|\Big]\\
    &\leq \E|\xi-\tilde\xi|\!+\!2K_1\!\int_s^t\!\E|\wt{X}_r\!-\!X_r|\d r\!+\!C\E\Big[\Big(\sup_{s\leq r\leq t}\!|\wt{X}_r\!-\!X_r|\int_s^t|\wt{X}_r\!-\!X_r|\d r\Big)^{\frac 12}\Big]\\
    &\leq \E|\xi-\tilde\xi|\!+\!\frac 14\E\big[\sup_{s\leq r\leq t}\!|\wt{X}_r-X_r|\big]+C \int_s^t\E|\wt{X}_r-X_r|\d r.
  \end{align*}
  Hence, \eqref{a-7} follows immediately by Gronwall's inequality. This lemma is proved.
\end{proof}

\begin{myprop}\label{lem-2}
Assume that   $\mathrm{(H1)}$-$\mathrm{(H3)}$ hold and $\sigma(t,x,\mu)$ depends only on $t,x$, then the value function  $V(s,\mu)$   defined in \eqref{a-4} satisfies that there exists a constant $C>0$ such that
\begin{equation}\label{cont}
|V(s,\mu)-V(s',\mu')|\leq C\big( \sqrt{|s'-s|} +\W_1(\mu,\mu')\big)
\end{equation} for any $s,s'\in [0,T]$, $\mu,\mu'\in \pb_1(\R^d)$.
\end{myprop}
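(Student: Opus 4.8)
The plan is to establish separately the spatial Lipschitz bound $|V(s,\mu)-V(s,\mu')|\le C\,\W_1(\mu,\mu')$ with the time variable frozen, and the temporal H\"older bound $|V(s,\mu)-V(s',\mu)|\le C\sqrt{|s-s'|}$ with the measure frozen; the statement then follows from the triangle inequality. The spatial bound has to be proved first, since it is used in the proof of the temporal one.

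\emph{Spatial bound.} Fix $s$ and $\veps>0$, and choose $\Theta=(\Omega,\F,\p,\{\F_t\},W_\cdot,X_\cdot,\alpha_\cdot)\in\Pi_{s,\mu}$ with $J(s,\mu;\Theta)\le V(s,\mu)+\veps$; recall $(\alpha_t)$ is $\F_t$-adapted. After enlarging the probability space, introduce an $\F_s$-measurable $\tilde\xi$ with $\law_{\tilde\xi}=\mu'$ forming an optimal coupling with $\xi=X_s$, so that $\E|\xi-\tilde\xi|=\W_1(\mu,\mu')$. Since $\sigma$ depends only on $(t,x)$ and is Lipschitz in $x$, while $b(t,x,\nu,\alpha)$ is Lipschitz in $(x,\nu)$ uniformly in $\alpha$ by $\mathrm{(H1)}$, the distribution dependent SDE $\d\wt X_t=b(t,\wt X_t,\law_{\wt X_t},\alpha_t)\d t+\sigma(t,\wt X_t)\d W_t$, $\wt X_s=\tilde\xi$, driven by the \emph{same} control process $(\alpha_t)$ and Brownian motion, is (strongly) well posed (cf. \cite[Theorem 2.1]{Wang18}); Lemma \ref{lem-1}(ii) then yields $\E\sup_{s\le r\le T}|X_r-\wt X_r|\le\W_1(\mu,\mu')\,\e^{K_5(T-s)}$, and a fortiori $\sup_{s\le r\le T}\W_1(\law_{X_r},\law_{\wt X_r})$ is bounded by the same quantity. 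Feeding these two estimates into the Lipschitz bounds of $\mathrm{(H3)}$ gives
\[
\Big|\E_{s,\mu}\Big[\int_s^T f(r,\wt X_r,\law_{\wt X_r},\alpha_r)\,\d r+g(\wt X_T,\law_{\wt X_T})\Big]-J(s,\mu;\Theta)\Big|\le C\,\W_1(\mu,\mu').
\]
The delicate point is that $(\alpha_t)=F_t(X_t)$ is adapted to the \emph{old} process and need not be of the form $\tilde F_t(\wt X_t)$, so $(\wt X_\cdot,\alpha_\cdot)$ is not yet a member of $\Pi_{s,\mu'}$; one repairs this by disintegrating, replacing $\alpha_t$ by its conditional law given $\wt X_t$, and using that $b$ and $f$ are affine in the control while $\sigma$ does not involve it, so that this replacement changes neither the one-dimensional marginals of $\wt X$ nor the cost (a Markovian-projection type argument). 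The resulting feedback control belongs to $\Pi_{s,\mu'}$ and has cost at most $V(s,\mu)+\veps+C\,\W_1(\mu,\mu')$; letting $\veps\downarrow 0$ gives $V(s,\mu')\le V(s,\mu)+C\,\W_1(\mu,\mu')$, and interchanging $\mu$ and $\mu'$ completes the spatial bound.

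\emph{Temporal bound.} Fix $\mu\in\pb_1(\R^d)$ and $s<s'$. For $V(s,\mu)\le V(s',\mu)+C\sqrt{s'-s}$, run on $[s,s']$ the constant control $\alpha_t\equiv\alpha_0\in\pb(U)$ (admissible under $\mathrm{(H1)},\mathrm{(H2)}$), producing a process with $\law_{X_{s'}}=:\mu_{s'}$; by $\mathrm{(H2)}$, the Burkholder-Davis-Gundy inequality, and the moment bound of Lemma \ref{lem-1}(i) one gets $\W_1(\mu_{s'},\mu)\le\E|X_{s'}-X_s|\le C\sqrt{s'-s}$. Concatenating this with a near-optimal feedback control for $(s',\mu_{s'})$, exactly as in the proof of Proposition \ref{lem-3} (via \cite[Lemma 3.3]{Haus}), yields $V(s,\mu)\le\E\big[\int_s^{s'}f(r,X_r,\law_{X_r},\alpha_r)\,\d r\big]+V(s',\mu_{s'})+\veps$; the running-cost term is bounded by $C(s'-s)$ by the linear growth in $\mathrm{(H3)}$ together with the moment bound, and $|V(s',\mu_{s'})-V(s',\mu)|\le C\,\W_1(\mu_{s'},\mu)\le C\sqrt{s'-s}$ by the spatial bound. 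For the reverse inequality, take a near-optimal feedback control $\Theta$ for $(s,\mu)$ with process $X$; by the flow property of \eqref{a-1} (cf. \cite{BLPR}) the restriction $(X_r,\alpha_r)_{r\in[s',T]}$ is a feedback control for $(s',\law_{X_{s'}})$, so $V(s',\law_{X_{s'}})\le J(s,\mu;\Theta)-\E\big[\int_s^{s'}f(r,X_r,\law_{X_r},\alpha_r)\,\d r\big]\le V(s,\mu)+\veps+C(s'-s)$, and then $|V(s',\mu)-V(s',\law_{X_{s'}})|\le C\,\W_1(\mu,\law_{X_{s'}})\le C\sqrt{s'-s}$ again by the spatial bound and $\E|X_{s'}-X_s|\le C\sqrt{s'-s}$. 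Letting $\veps\downarrow 0$ finishes the temporal bound, and hence the proposition.

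The main obstacle is the admissibility step inside the spatial bound: turning the open-loop control produced by the coupling into a genuine feedback control in $\Pi_{s,\mu'}$ without increasing the cost. Once this is granted, everything else is a routine combination of Lemma \ref{lem-1}, the Lipschitz and growth estimates of $\mathrm{(H3)}$, the flow property of \eqref{a-1}, and the concatenation device already employed for the dynamic programming principle.
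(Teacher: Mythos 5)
Your argument is correct in outline and rests on the same three ingredients as the paper's proof: an optimal coupling of the initial laws combined with the stability estimate of Lemma \ref{lem-1}(ii) applied to two copies of the SDE driven by the \emph{same} control process and Brownian motion; the Lipschitz and growth bounds of $\mathrm{(H3)}$; and a Burkh\"older--Davis--Gundy estimate giving $\E|X_{s'}-X_s|\le C\sqrt{|s'-s|}$. The organization, however, is genuinely different. You split the modulus of continuity into a spatial Lipschitz bound and a temporal H\"older bound and glue them by the triangle inequality, with the temporal half consuming the spatial half (to pass from $V(s',\mu_{s'})$ to $V(s',\mu)$) together with the concatenation device of Proposition \ref{lem-3} and, for the reverse inequality, the flow-property restriction argument. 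The paper does everything in one shot: it takes an $\veps$-optimal $\Theta^\veps\in\Pi_{s',\mu'}$, prepends a \emph{constant} control on $[s,s')$, starts the new process at time $s$ from a $\xi$ optimally coupled to $\xi'$, and compares the two costs directly --- on $[s',T]$ via Lemma \ref{lem-1}(ii) with initial data $\wt X_{s'}$ and $\xi'$, and on $[s,s')$ via the growth bound in $\mathrm{(H3)}$. The paper's route is shorter, invokes the coupling argument only once, and does not need the spatial bound as an input to the temporal one; yours is more modular and makes explicit which hypothesis drives which half of the estimate. (Both versions produce a constant depending on the first moments of $\mu,\mu'$ through $\E\sup_r|\wt X_r|$, so $C$ is really only locally uniform; this is shared and not specific to your write-up.)

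The one substantive divergence is the admissibility repair, and it is also the only place where your argument is not fully closed. You rightly flag that transporting $(\alpha_t)=F_t(X_t)$ onto the coupled process $\wt X$ yields an open-loop control, not an element of $\Pi_{s,\mu'}$, and you propose a Markovian-projection (mimicking) step to restore the feedback form without changing marginals or cost; note that the paper faces the very same issue --- its $\tilde\alpha_t=\alpha_t^\veps=F_t(X^\veps_t)$ for $t\ge s'$ is a functional of the \emph{old} process, not of $\wt X_t$ --- and disposes of it by asserting that $\wt\Theta\in\Pi_{s,\mu}$ ``can be checked directly''. So your extra step addresses a point the paper leaves implicit rather than being a detour. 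But the projection alone does not finish the job: Definition \ref{def-1}(iii) also requires uniqueness in law for the SDE driven by the projected feedback $\hat F_t$, and under $\mathrm{(H1)}$--$\mathrm{(H3)}$ alone the effective drift $x\mapsto b(t,x,\nu,\hat F_t(x))$ need not be regular enough for that (compare Remark \ref{rem-2.1}); a clean fix is either to run the continuity estimate over a relaxed/open-loop class whose infimum one shows equals $V$, or to assume $\mathrm{(H4)}$ as in Theorem \ref{thm-1} and invoke Lemma \ref{lem-4}. With that caveat --- which applies equally to the paper's own proof --- your argument goes through.
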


\begin{proof}
  Let $0\leq s<s'\leq T$ and $\mu,\mu'\in \pb_1(\R^d)$. For any $\veps>0$,  there exists an admissible control $\Theta^\veps=(\Omega^\veps,\F^\veps,\p^\veps, \{\F^\veps_t\}_{t\geq 0}, W^\veps_\cdot, X^\veps_\cdot, \alpha^\veps_\cdot)\in \Pi_{s',\mu'}$ such that
  \[J(s',\mu;\Theta^\veps)\leq V(s',\mu)+\veps.\]
  Here $X^\veps_\cdot$ satisfies the following SDE
  \[\d X_t^\veps=b(t, X_t^\veps, \law_{X_t^\veps}, \alpha_t^\veps)\d t+\sigma(t,X_t^\veps) \d W_t^\veps,\quad X_{s'}^\veps=\xi', \ \law_{\xi'}=\mu'\]
  for $t\in [s', T]$.
  Note that $\alpha^\veps_t$ is given for $t\in [s',T]$. Define
  \[\tilde \alpha_t=\alpha\in \pb(U),\ \text{if $t\in [s,s')$};\ \ \  \tilde \alpha_t=\alpha_t^\veps,\ \text{ if $t\in [s',T]$}.\]
  Let $(\wt{X}_t)_{t\in [s,T]}$ be the unique solution to the following SDE
  \[\d \wt{X}_t=b(t,\wt{X}_t,\law_{\wt{X}_t},\tilde\alpha_t)\d t+\sigma(t,\wt{X}_t)\d W_t^\veps, \quad \wt X_s=\xi, \ \law_\xi=\mu, t\in [s,T],\] whose wellposedness is guaranteed by (H1), (H2) and the construction of $\tilde\alpha_\cdot$.
  Here the random variable $\xi$ is chosen so that
  \[\E|\xi-\xi'|=\W_1(\mu,\mu'),\]
  whose existence is a result on the existence of optimal coupling of $\mu$ and $\mu'$; see, e.g. \cite{Vil}.
  Introduce a new  term $\wt{\Theta}=(\Omega^\veps,\F^\veps,\p^\veps, \{\F^\veps_t\}_{t\geq 0},  W^\veps_\cdot, \wt{X}_\cdot, \tilde{\alpha}_\cdot)$, and under condition (H1), (H2), we can check directly that $\wt{\Theta}\in \Pi_{s,\mu}$  according to Definition \ref{def-1}.
  Therefore, by (H3),
  \begin{equation}\label{val-1}
  \begin{aligned}
V(s,\mu)-V(s',\mu')&\leq \!\E\Big[\!\int_s^T \!\!\! f(r,\wt{X}_r,\law_{\wt{X}_r},\tilde\alpha_r)\d r\!+\!g(\wt{X}_T,\law_{\wt{X}_T})\Big]\\
    &\quad - \!\E\Big[\!\int_{s'}^T \!\!\! f(r,X^\veps_r,\law_{X^\veps_r},\alpha^\veps_r)\d r \!+\!g(X_T^\veps, \law_{X_T^\veps})\Big]\!+ \veps\\
    &\leq K_3\E\Big[\!\int_{s'}^T \!\!\big(|\wt{X}_r\!-\!X_r^\veps| \!+\! \W_1(\law_{\wt{X}_r},\law_{X_r^\veps})\big)\d r\Big]\! +\! 2K_3\E|\wt{X}_T -X_T^\veps |\\
    &\qquad +K_4\E\Big[\int_s^{s'} (1+|\wt{X}_r|+\E|\wt{X}_r|)\d r \Big]\!+\veps.
  \end{aligned}
  \end{equation}
By Lemma \ref{lem-1}, (H1), (H2) and Burkh\"older-Davis-Gundy's inequality,  there exists a constant  $c_1$ such that
\begin{align*}
\E|\wt{X}_{s'}\!-\!\wt{X}_s|&\leq \int_s^{s'}\!\!K_2\big(   1+2\E|\wt{X}_r| \big) \d r \!+\!C\E\Big[\Big(\int_s^{s'}\!\! \|\sigma(r,\wt{X}_r)\|^2\d r\Big)^{\frac 12}\Big]\\
&\leq K_2\int_s^{s'}(1+2\E|\wt{X}_r|)\d r\!+\! C\E\Big[K_2\sup_{s\leq r\leq s'}\! (1+|\wt{X}_r|)\sqrt{|s'-s|}\Big]\\
&\leq c_1(|s'-s|+\sqrt{|s'-s|}).
\end{align*}

Furthermore, by  Lemma \ref{lem-1}, there exist constants $c_2,\,c_3>0$ such that
\begin{align*} V(s,\mu)-V(s',\mu')&\leq c_2\big( \E|\wt{X}_{s'}-\xi'|+|s'-s|\big)+\veps\\
&\leq c_2\big(\E|\wt{X}_{s'}-\wt{X}_s|+\E|\xi-\xi'|+ |s'-s|\big)+\veps\\
&\leq c_3\big( \sqrt{|s'-s|}+\W_1(\mu,\mu')\big)+\veps.
\end{align*} Letting $\veps\to 0$, we get
\[V(s,\mu)-V(s',\mu')\leq c_3\big( \sqrt{|s'-s|}+\W_1(\mu,\mu')\big).\]
Similarly, we can prove that $V(s',\mu')-V(s,\mu)\leq c_3( \sqrt{|s'-s|}+\W_1(\mu,\mu'))$, and obtain the desired conclusion.
\end{proof}



\end{document}